\setlist[enumerate]{leftmargin=*}
\title{ Cohomology of moduli space of cubic fourfolds I}
\author{Fei si}
\address{\ Shanghai Research Center For Mathematical Science,
Shanghai, 200433, People's Republic of China}
\email{fsi15@fudan.edu.cn}
\DeclareMathOperator{\Aut}{Aut}
\DeclareMathOperator{\codim}{codim}
\DeclareMathOperator{\stab}{stab}
\DeclareMathOperator{\IH}{IH}
\DeclareMathOperator{\IP}{IP}
\DeclareMathOperator{\diag}{diag}
\numberwithin{equation}{section}
\newcommand{\bB}{{\mathbb B}}
\newcommand{\bZ}{{\mathbb Z}}
\newcommand{\bC}{{\mathbb C}}
\newcommand{\bQ}{{\mathbb Q}}
\newcommand{\bP}{{\mathbb P}}
\newcommand{\bR}{{\mathbb R}}
\newcommand{\bH}{{\mathbb H}}
\newcommand\cO{{\mathcal{O}}}
\newcommand\HH{{\mathcal{H}}}
\newcommand\cM{{\mathcal{M}}}
\newcommand\aL{{\mathcal{L}}}
\newcommand\ft{{\mathfrak{t}}}
\newcommand\cB{\mathcal{B}}
\newcommand\DD{{\mathcal{D}}}
\newcommand\cF{{\mathcal{F}}}
\newcommand{\q}{/\!\!/}
\newcommand\Proj {{\text{\rm Proj}}}
\newtheorem{thm}{Theorem}[section]
\newtheorem{cor}[thm]{Corollary}
\newtheorem{prop}[thm]{Proposition}
\theoremstyle{definition}
\newtheorem{defn}[thm]{Definition}
\newtheorem{exmp}[thm]{Example}
\newtheorem{rem}[thm]{Remark}
\theoremstyle{remark}
\begin{document}

\begin{abstract}
In this paper we compute the cohomology of moduli space of cubic fourfolds with ADE type singularities relying on Kirwan's blowup and Laza's GIT construction. More precisely, we obtain the Betti numbers of Kirwan's resolution of the moduli space. Furthermore, by applying decomposition theorem we obtain the Betti numbers of  the intersection cohomology of Baily-Borel compactification of the moduli space.
\end{abstract}

\maketitle
\pagestyle{myheadings} \markboth{\hfill Fei Si
\hfill}{\hfill Cohomology of moduli space of cubic fourfold\hfill}

\section{Introduction}

The study of cubic fourfolds and their moduli space  is a classical topic in algebraic geometry and has attracted lots of attentions in various aspects. The purpose of  this paper is to investigate the topology of the coarse moduli space $\mathcal{M}$  of cubic fourfolds with ADE singularities at worst and its various compactifications. The celebrated  works of Voisin \cite{MR860684} \cite{voisin2008theoreme}, Hassett\cite{hassett2000special},  Laza \cite{laza2010moduli} and  Looijenga \cite{looijenga2009period} establish the global Torelli theorem for the cubic fourfolds completely. Thus we can identify the moduli space $\cM$ as the complement $\mathcal{D}/\Gamma-\HH_\infty$ of a Heegner divisor $\HH_\infty$ in the  Shimura variety $\mathcal{D}/\Gamma$ (see \ref{torelli}). This provides many compactifications of $\mathcal{M}$ from the arithmetic side, e.g.  Baily-Borel's compactification \cite{MR554917}, Looijenga's \cite{looijenga2003compactifications} and  toriodal compactifications \cite{MR0457437}. These compactifications  imply that $\cM$ is a  quasi-projective variety.  

The cohomology of moduli space is a basic invariant of particular interest for the moduli space and it is a part of cycle theory on the moduli space. The cohomology rings  of moduli spaces $M_g$ of genus $g$ curves have been studied  decades  since the work of Mumford. Mumford defined the tautological cohomological rings of $M_g$ as subrings of the cohomology rings generated by kappa classes and similarly defined  tautological Chow rings of $M_g$.  It is expected the  tautological cohomological rings are isomorphic to tautological Chow rings via cycle classes maps (see  Question 0.1 in \cite{rahul}). But so far it is only known for genus less than 23.  The cohomology ring  of  the moduli spaces $\cF_g$ of quasi-polarised K3 surface of fixed degree $2g-2$ has also  many progress recently (see \cite{MR3639598}, \cite{PY} \cite{MR3953432}). The tautological ring of  moduli space $\cF_g$ involves more  ingredients than that of $M_g$. In \cite{MR3639598} and  \cite{PY}, it is shown that tautological rings are generated by Noether-Lefschetz cycles.  Cubic fourfolds and K3 surfaces share similar Hodge theory and  the Torelli thorem holds for the both. So it is natural to ask the similar questions for moduli spaces of cubic fourfolds. This is the main motivation for  us to study the topology of $\cM$ and its compactifications  as the first step to the similar picture for cubic fourfolds.

For a topological space $Y$, we denote by
$$P_t(Y):= \mathop{\sum} b_i(X)t^i,\ \ \IP_t(Y):= \mathop{\sum} \dim \IH^i(X) t^i$$
 the  Poincar\'e polynomial of singular cohomology and intersection cohomology ( with respect to middle perversity) of $Y$. Let $\widetilde{\mathcal{M}}$ be the partial desingularization of the GIT compactification $\overline{\mathcal{M}}$ in the sense of Kirwan.  The first main result in the paper is
\begin{thm} \label{thm1.1} The Poincar\'e polynomial of $\widetilde{\mathcal{M}}$ is given by 
\begin{equation*}
    \begin{split}
        P_{t}(\widetilde{\mathcal{M}})=&1+9t^{2}+26t^{4}+51t^{6}+81t^{8}+115t^{10}+152t^{12}+193t^{14}\\
        &+236t^{16}+280t^{18}+324t^{20}+280t^{22}+236t^{24}+193t^{26}\\
        &+152t^{28}+115t^{30}+81t^{32}+51t^{34}+26t^{36}+9t^{38}+t^{40}
    \end{split}
\end{equation*}
\end{thm}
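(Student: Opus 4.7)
The plan is to apply Kirwan's algorithmic framework for the rational cohomology of a partial GIT desingularization. The starting data is the natural $G := SL(6)$-action on $V := H^0(\mathbb{P}^5, \mathcal{O}(3))$, so $\dim \mathbb{P}(V) = 55$ and $\dim G = 35$; this gives $\dim \widetilde{\mathcal{M}} = 20$, consistent with the top degree $t^{40}$ in the stated polynomial. The resolution $\widetilde{\mathcal{M}}$ is built from $\mathbb{P}(V)^{ss}$ by iteratively blowing up, in order of decreasing dimension of stabilizer, the loci of strictly semistable points with positive-dimensional reductive stabilizers, and then taking the GIT quotient of the resulting $\widetilde{\mathbb{P}(V)}^{ss}$ by $G$.

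The first main step is to compute the $G$-equivariant Poincar\'e polynomial $P_t^G(\mathbb{P}(V)^{ss})$. The norm-square of the moment map induces an equivariantly perfect Morse stratification $\mathbb{P}(V) = \mathbb{P}(V)^{ss} \sqcup \bigsqcup_{\beta \neq 0} S_\beta$, giving
\[
P_t^G(\mathbb{P}(V)^{ss}) = P_t^G(\mathbb{P}(V)) - \sum_{\beta \neq 0} t^{2 d_\beta}\, P_t^{L_\beta}(Z_\beta^{ss}),
\]
where $d_\beta$ is the complex codimension of $S_\beta$, $L_\beta$ is the Levi of the destabilizing parabolic associated to $\beta$, and $Z_\beta^{ss}$ is the corresponding fixed-point semistable locus. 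The ambient factor $P_t^G(\mathbb{P}(V)) = P_t(\mathbb{P}^{55}) \cdot P_t(BSL(6))$ is explicit from $P_t(\mathbb{P}^{55}) = (1 - t^{112})/(1-t^2)$ and $P_t(BSL(6)) = \prod_{i=2}^{6} (1-t^{2i})^{-1}$. The indices $\beta$ and the data $(L_\beta, Z_\beta)$ are read off from the Hilbert--Mumford numerical criterion for cubic fourfolds via Laza's GIT analysis, and each contribution $P_t^{L_\beta}(Z_\beta^{ss})$ is computed by the same recursion on smaller products of projective spaces.

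The second main step is Kirwan's partial desingularization with bookkeeping of $P_t^G$. For each connected reductive subgroup $R$ arising as the stabilizer of a polystable cubic fourfold, the locus $Z_R^{ss}$ of strictly semistable points fixed by $R$ is smooth, and blowing up $G\cdot Z_R^{ss}$ alters the equivariant Poincar\'e polynomial by a correction expressible in terms of $P_t^R(Z_R^{ss})$ and the equivariant cohomology of the projectivized normal slice modulo $R$. After the finite sequence of blow-ups, stabilizers become finite and $P_t(\widetilde{\mathcal{M}}) = P_t^G(\widetilde{\mathbb{P}(V)}^{ss})/P_t(BSL(6))$. The principal obstacle is precisely this bookkeeping: one must enumerate every conjugacy class of positive-dimensional stabilizer in $\mathbb{P}(V)^{ss}$ (various tori together with a few small reductive groups arising from the highly symmetric cubics in Laza's list), describe each fixed locus and its normal $R$-representation explicitly to evaluate the blow-up correction, and check that intermediate blow-ups create no new positive-dimensional stabilizer types. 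Once this combinatorial data is assembled, the stated polynomial follows from routine arithmetic; its palindromic shape is a useful consistency check, as rational Poincar\'e duality holds on the smooth Deligne--Mumford orbifold $\widetilde{\mathcal{M}}$ of complex dimension $20$.
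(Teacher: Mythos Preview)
Your outline is essentially the paper's approach: compute $P_t^G(\mathbb{P}(V)^{ss})$ via Kirwan's equivariantly perfect stratification, then perform the sequence of blow-ups along the strictly semistable loci from Laza's GIT classification (the paper carries out eight of them, for the stabilizers $SL(3)$, $T^4$, $SL(2)$, two rank-$2$ tori, and three copies of $\mathbb{C}^\ast$), tracking the correction term $A_i(t)$ at each step, and finally invoking Poincar\'e duality to fill in degrees above $20$.

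One point needs correction: the last formula should read $P_t(\widetilde{\mathcal{M}}) = P_t^G(\widetilde{\mathbb{P}(V)}^{ss})$, \emph{not} $P_t^G(\widetilde{\mathbb{P}(V)}^{ss})/P_t(BSL(6))$. Once all stabilizers are finite, the Borel construction $EG \times_G \widetilde{\mathbb{P}(V)}^{ss}$ is a rational homology equivalence onto the orbifold quotient, so the $G$-equivariant rational cohomology already coincides with the ordinary rational cohomology of $\widetilde{\mathcal{M}}$ (this is Theorem~\ref{eq}(1) in the paper); dividing further by $P_t(BSL(6))$ would be an error.
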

Using the explicit resolution of of the  period maps, we also compute the intersection  cohomology of the Baily-Borel compactification of the Shimura variety $\mathcal{D}/ \Gamma$ .
\begin{thm}
The intersection cohomology Poincar\'e polynomial of $\overline{\mathcal{D}/ \Gamma}^{BB}$ is given by
\begin{equation*}
    \begin{split}
    \IP_t(\overline{\mathcal{D}/ \Gamma}^{BB})=&1+2t^2+5t^4+13t^6+24t^8+38t^{10}+54t^{12}+70t^{14}\\
    &+88t^{16}+107t^{18}+137t^{20}+107t^{22}+88t^{24}+70t^{26}\\
    &+54t^{28}+38t^{30}+24t^{32}+13t^{34}+4t^{36}+2t^{38}+t^{40}.
    \end{split}
\end{equation*}
\end{thm}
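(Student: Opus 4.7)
The plan is to apply the decomposition theorem to a resolution of the period map and read off the intersection cohomology of the Baily--Borel compactification from Theorem~\ref{thm1.1}. By the global Torelli theorem of Voisin--Hassett--Laza--Looijenga, one has $\cM\cong\DD/\Gamma-\HH_\infty$, and the GIT/Baily--Borel comparison of Laza and Looijenga describes the birational map $\overline{\cM}\dashrightarrow\overline{\DD/\Gamma}^{BB}$ explicitly as a sequence of modifications along the Heegner divisor and the boundary cusps. My first step would be to promote this to a projective morphism $p\colon \widetilde{\cM}\to \overline{\DD/\Gamma}^{BB}$ from the Kirwan blow-up, after a further small modification of $\widetilde{\cM}$ if needed to resolve the remaining indeterminacy along $\HH_\infty$.

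Next I would apply the Beilinson--Bernstein--Deligne--Gabber decomposition theorem to $p$, obtaining a splitting
\begin{equation*}
Rp_\ast\bQ_{\widetilde{\cM}}[\dim\widetilde{\cM}] \;\cong\; \mathrm{IC}\bigl(\overline{\DD/\Gamma}^{BB}\bigr) \;\oplus\; \bigoplus_{\alpha}\mathrm{IC}\bigl(\overline{Z_\alpha},L_\alpha\bigr)[d_\alpha],
\end{equation*}
where the $Z_\alpha$ range over closed strata of the exceptional locus: the closure of the Heegner divisor $\HH_\infty$ together with the closures of the Type~II and Type~III boundary components of $\overline{\DD/\Gamma}^{BB}$. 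Taking hypercohomology and combining with Theorem~\ref{thm1.1} yields
\begin{equation*}
\IP_t\bigl(\overline{\DD/\Gamma}^{BB}\bigr) \;=\; P_t(\widetilde{\cM}) \;-\; \sum_{\alpha} t^{-d_\alpha}\,\IP_t\bigl(\overline{Z_\alpha},L_\alpha\bigr),
\end{equation*}
so the entire calculation reduces to computing each correction term.

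To evaluate those corrections I would proceed stratum by stratum. The proper transform of $\HH_\infty$ is, up to a finite quotient, an arithmetic Shimura variety for a lattice of corank one, so the same decomposition-theorem procedure applies recursively on a Kirwan-type resolution of it, with the recursion terminating at the lower-dimensional cusps. Over each Baily--Borel cusp, Kirwan's construction describes the fiber of $p$ explicitly as a quotient of a rational variety by a reductive stabiliser arising in the blow-up, so its cohomology pins down the local contribution $t^{-d_\alpha}\IP_t(\overline{Z_\alpha},L_\alpha)$. The main obstacle is the bookkeeping: the combinatorics of how $\HH_\infty$ and the various cusp closures meet is dictated by the intersection pattern of arithmetic sub-Shimura varieties in $\overline{\DD/\Gamma}^{BB}$ together with the Kirwan stratification on the GIT side, and it takes delicate work to match strata on both sides and to identify every support summand, its shift $d_\alpha$, and its local system $L_\alpha$ without omission. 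A useful consistency check at the end is Poincar\'e duality for intersection cohomology on the projective $\overline{\DD/\Gamma}^{BB}$, which forces $\IP_t$ to be palindromic of degree $40$ and therefore constrains the coefficients tightly.
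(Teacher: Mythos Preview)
Your overall strategy---resolve the period map and apply the decomposition theorem---is the right one, but the paper executes it along a more structured route that you are missing, and one of your claims about the supports is off.

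The paper does \emph{not} work directly with $p\colon\widetilde{\cM}\to\overline{\DD/\Gamma}^{BB}$. Instead it factors through the intermediate space $\widehat{\cM}$ obtained by performing only the first two Kirwan blow-ups (at the point $\omega$ and then along the curve $\chi$). The point is that, by Laza's analysis, $\widehat{\cM}$ already resolves the period map: one has $\widehat{\cM}\xrightarrow{f}\overline{\DD/\Gamma}^{\Sigma(\HH_\infty)}\xrightarrow{\nu}\overline{\DD/\Gamma}^{BB}$, where $f$ contracts the single divisor $E_\chi$ onto an explicit $18$-dimensional locus $E_\chi\cap\widetilde{E}_\omega$ and $\nu$ is a small (semi-toric) modification with fibers $\bP^1$ or $\bP^2$ over four cusp points. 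So the paper first goes \emph{down} from $\widetilde{\cM}$ to $\widehat{\cM}$ by running Kirwan's intersection-cohomology blow-up formula (\ref{ic}) in reverse for the six remaining exceptional divisors $E_\mu,E_\gamma,E_\alpha,E_\delta,E_\tau,E_\zeta$, obtaining $\IP_t(\widehat{\cM})$, and only then applies the decomposition theorem to the two simple maps $f$ and $\nu$.

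This matters for your support analysis. Over a generic point of $\HH_\infty$ the map $\widehat{\cM}\to\overline{\DD/\Gamma}^{BB}$ is an isomorphism (the exceptional divisor $E_\omega$ is identified with the GIT space of plane sextics, and the restriction is the degree-$2$ K3 period map, birational by Torelli). Hence $\overline{\HH_\infty}$ itself is \emph{not} a support of the decomposition; the genuine supports are the codimension-$2$ self-intersection locus of $\HH_\infty$ (the image of $E_\chi$) and the four boundary cusps listed in Table~\ref{tab:2}. Your proposed recursion on $\HH_\infty$ as a corank-one Shimura variety is therefore not needed and would not by itself produce the correction; what is needed instead is the computation of $P_t(E_\chi\cap\widetilde{E}_\omega)\cong P_t\bigl(\bP(H^0(\cO_{\bP^1}(8))\oplus H^0(\cO_{\bP^1}(12)))\q SL(2)\bigr)$, which the paper carries out directly via Kirwan's formula. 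Without the intermediate $\widehat{\cM}$ and this K3/plane-sextic identification (Corollary~\ref{cor}), the ``bookkeeping'' you flag would be essentially intractable.
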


\begin{rem}
It is interesting  to  note that by  recent results of Liu \cite{2020arXiv200714320L}, the GIT moduli space of cubic fourfolds is isomorphic to the K-moduli space of cubic fourfolds, that is, the space of  isomorphic classes of cubic fourfolds admitting  Kahler-Einstein metrics. So our computations also provide cohomological results on the  K-moduli space  (see  \cite{2020arXiv201110477X} for  a nice survey of K-moduli spaces).  
\end{rem}

\begin{rem}
 We are most interested in  cohomology of the open part $\DD/\Gamma$ and the complement of Heegener divisor  $\DD/\Gamma-\HH_\infty$. But at present there are some technical difficulty. The problem will be investigated in the future. 
\end{rem}
\begin{rem}
As a complement of a divisor of Shimura variety, the cohomology of  $\cM$  is closely related to the representation theory (see \cite{MR554917} ). Our computation here is based on geometric results. It is expected there is a representation theoretic explanation. After the computational results,  it is an interesting topic to study  the generators of the intersection cohomology in each degree and ask whether these  generators are generated by special cycles (for example, see \cite{MR2594629}). 
\end{rem}

\begin{rem}
It is worth to mention that it is still unknown whether the tautological Chow ring of $\cF_g$ and its tautological cohomology ring are isomorphic.
\end{rem}

The strategy of our approach is as follows: first, the equivariant cohomology of GIT quotient space $\overline{\mathcal{M}}$ can be computed by the stratifications. This relies on from Kirwan's general theory on cohomology of quotient space. Then we apply the partial desingularization procedure, that is, take a successive blowups along GIT strictly semistable loci. And then We  keep track of the change of  cohomology  for each blowup in  partial desingularization and thus we can obtain the cohomology of $\widetilde{\cM}$. For the computation of intersection cohomology of $\overline{\mathcal{D}/ \Gamma}^{BB}$, we need to make use of the  geometry of moduli spaces.  The Torelli theorem provides a birational map $p: \overline{\mathcal{M}} \dashrightarrow \overline{\mathcal{D}/ \Gamma}^{BB}$ between GIT compactification $\overline{\mathcal{M}}$ and the Baly-Borel compactification $\overline{\mathcal{D}/ \Gamma}^{BB}$. The birational map can be explicitly resolved via Kirwan's partial desingularization , that is, there is a diagram
\begin{center}
    \begin{tikzcd}[column sep=small] & \arrow[dl]  \widehat{\mathcal{M}}  \arrow[dr] & \\ \overline{\mathcal{M}} 
\arrow[rr, dashrightarrow, "p"] & & \overline{\mathcal{D}/ \Gamma}^{BB}
\end{tikzcd}
\end{center}
where $\widehat{\mathcal{M}} $ is the intermediate space during the Kirwan's partial desingularization. One can use blowup formula of the intersection cohomlogy reversely  to get the cohomology of  $\widehat{\mathcal{M}}$ from  the cohomology of $\widetilde{\cM}$.  Then the  decomposition theorem of Beilinson-Bernstein-Deligne-Gabber \cite{BBDG} will provide a way to compute  the intersection  cohomology of Baily-Borel compactification of the Shimura variety $\mathcal{D}/ \Gamma$. 

This strategy has been worked out for the moduli space of  K3 surfaces of degree 2 (see \cite{kirwan1989cohomology}, \cite{kirwan1989cohomology2}) and the moduli space of cubic threefolds (see \cite{casalaina2019cohomology}). The main difficulty in our case is that the boundary strata of the GIT compactification $\overline{\cM}$ are much more complicated than the above two cases, but the our observation is that the GIT moduli space of K3 surfaces of degree 2 will appear as an exceptional divisor in Kirwan's desingularization of $\overline{\cM}$, then the computations of  Kirwan-Lee in \cite{kirwan1989cohomology2} will help us to simplify the computations.

\subsection*{Outline}

The paper is organised as follows: In section  \ref{se2}, we review the contruction of moduli space $\cM$, its GIT compactification and the  cohomology theory used in this paper. We also introduce Kirwan's partial desingularization package.  In section  \ref{se4}, we use Kirwan's methods to compute the cohomology of the partial resolution  $\widetilde{\mathcal{M}}$. In section \ref{se5}, we introduce the global Toreli theorem for cubic fourfolds and use the decomposition theorems to compute the intersection cohomology of the Baily-Borel compactification $\overline{\mathcal{D}/ \Gamma}^{BB}$ of the  moduli space of cubic 4-folds. 

\subsection*{Notations and Conventions}
\begin{enumerate}
     \item $\mathcal{M}$ the moduli space   of cubic fourfolds with ADE singularities;
     \item $\overline{\mathcal{M}}$ the GIT comaptification of $\mathcal{M}$;
    \item $\widetilde{\mathcal{M}}$ the Kirwan's desingularization space;
    \item $\mathbb{C}[x_{0},x_{2},...,x_{n}]_{d}$ means degree $d$ homogeneous polynomials in $n+1$ variables;
    \item $l(x),q(x),c(x)$ means linear, quadratic and cubic forms in $x$;
    \item $\{ \hbox{polynomial} \}$ means the vector space spanned by the monomials of the polynomial;
    \item $\alpha,\mu,\gamma,\delta,\cdots$ the  strata of GIT boundaries;
    \item $Z_\alpha,Z_\mu,Z_\gamma\cdots$ the parametrizing space of strata $\alpha,\mu,\gamma,\delta,\cdots$;
    \item $R_\alpha,R_\mu,R_\gamma\cdots$ the stabilizer subgroup of strata $\alpha,\mu,\gamma,\delta,\cdots$;
    
    \item $E_\alpha,E_\mu,E_\gamma\cdots$ the exceptional divisor of Kirwan blowups;
    \item $N(R)$ the normalier subgroup of a subgroup $R$ in group $G$;
    \item $T^n$ the $n$-dimensional complex torus;
    
    \item $BG$ the classifying space of a group $G$;
    \item $\stab(\beta)$ the stabilier subgroup of a vector $\beta$ in Lie algebra by adjoint action;
    \item All cohomology theory $H^\ast,\ \IH^\ast,\cdots $ will take $\bQ$-coefficients;
    \item $D_c(X)$ the bounded derived category of constructible sheaves with $\bQ$-coefficients;
    \item $Sym^n(X)$ the $n$-th symmetric product of a variety $X$.
\end{enumerate}

\section{Preliminaries} \label{se2}
\subsection{Moduli space of Cubic fourfolds}
We work over $\bC$. A cubic fourfold $X$ is a hypersurface in $\bP^5$ defined by a homogeneous polynomial of degree $3$.  
\begin{defn}
We call a cubic fourfold $X$ has ADE singularities if it has only isolated singularities and each singularity germ is a 4-dimensional  hypersurface singularity in $(\bC^5,0)$ that can be written as $x_4^2+x_5^2+f(x_1,x_2,x_3)$ where $f(x_1,x_2,x_3)$  is the equation of the surface singularity  of ADE type.
\end{defn}

Let $\cM$ be the coarse moduli space parametrizing isomorphic classes of cubic fourfolds with ADE singularities. It can be constructed  as follows: The Hilbert scheme of cubic hypersurfaces in $\bP^5$ is isomorphic to the projective space  $\bP(\mathbb{C}[x_{0},x_{2},...,x_{5}]_3)$. The action $G=SL(6,\bC)$ on  $\bP^5$ will induce the action $G$ on $\mathbb{C}[x_{0},x_{2},...,x_{5}]_3\cong H^0(\bP^5,\cO(3))$ and so on  $\bP(\mathbb{C}[x_{0},x_{2},...,x_{5}]_3)$. Let $U \subset \bP(\mathbb{C}[x_{0},x_{2},...,x_{5}]_3)$ be the locus of cubic fourfolds with ADE singularities and it is a open subset. By choosing any ample $G$-linearization  $L$ on $\bP \mathbb{C}[x_{0},x_{2},...,x_{5}]_3$, we have a GIT stability for the point  in $\bP \mathbb{C}[x_{0},x_{2},...,x_{5}]_3$ in the sense of Mumford \cite{mumford1994geometric}. We say a cubic fourfold $X$ is (semi) stable if its associated point $[X]\in \bP (\mathbb{C}[x_{0},x_{2},...,x_{5}]_3)$ is GIT (semi)stable. By a result of Laza, we know
\begin{thm}[\cite{laza2009moduli}, Theorem 5.6]
A cubic fourfold with ADE singularities  is stable.
\end{thm}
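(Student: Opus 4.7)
The goal is to prove that any cubic fourfold $X\subset\bP^5$ with at worst ADE singularities is GIT-stable with respect to the natural $SL(6,\bC)$-linearization on $\bP(\bC[x_0,\dots,x_5]_3)$. My plan is to argue the contrapositive via the Hilbert--Mumford numerical criterion: if $X$ is \emph{not} stable, then there is a one-parameter subgroup $\lambda:\mathbb{G}_m\to SL(6,\bC)$ for which the Hilbert--Mumford weight $\mu([X],\lambda)\le 0$, and after diagonalising $\lambda=\diag(t^{r_0},\dots,t^{r_5})$ with $\sum r_i=0$ and $r_0\ge\cdots\ge r_5$, every monomial $x_0^{a_0}\cdots x_5^{a_5}$ appearing in $F$ (the defining cubic) must satisfy $\sum a_i r_i\le 0$. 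I want to show such an $F$ always has a singularity worse than ADE.

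First I would reduce to a finite combinatorial problem. The set of non-stable weight vectors $(r_0,\dots,r_5)$ is an (infinite) cone, but the maximal non-stable strata are cut out by a finite list of extremal 1-PS (equivalently, a finite list of maximal monomial subsets closed under the weight condition). I would enumerate these minimal destabilising weight vectors up to the Weyl-group action $S_6$; this is exactly the bookkeeping that Mumford codified and that Laza carries out in detail. For each such $\lambda$, the $\lambda$-semi-invariant locus is given by polynomials $F$ supported in an explicit monomial polytope, and this forces $X$ to pass through the distinguished coordinate point $p=[1:0:\cdots:0]$ (or a flag containing it) with a prescribed form of local equation.

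Next, for each destabilising $\lambda$, I would analyse the germ $(X,p)$ by working in affine coordinates $x_0=1$ and reading off the initial terms dictated by the weight inequality. In each case the restriction on monomials forces either (i) the singular point to be non-isolated (e.g.\ $F$ lies in a small number of variables, so the singular locus is a positive-dimensional linear subspace), or (ii) the tangent cone/multiplicity at $p$ to be too degenerate for an ADE singularity. Concretely, for an isolated hypersurface singularity of dimension $4$ to be ADE, after the Morse-type splitting $x_4^2+x_5^2+f(x_1,x_2,x_3)$ the corank must be $\le 3$ and $f$ must define an ADE surface singularity; the destabilising monomial conditions will typically push the corank to $4$ or higher, or force $f$ to have non-isolated zero locus, contradicting the ADE assumption.

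The main obstacle is step two: producing the complete, non-redundant list of maximal destabilising 1-PS and, for each, verifying the singularity analysis without missing a case. This is delicate because the cubic fourfold case has many more strata than the cubic threefold case, and some destabilising directions produce cubics whose singular locus must be checked globally rather than only at the fixed point of $\lambda$ (e.g.\ a 1-PS may only force a high-multiplicity point, and one must show that any completion to an ADE cubic is impossible). I would mirror Laza's approach by organising the 1-PS list via state polytopes and systematically excluding each stratum, appealing where possible to known results on cubic threefold/surface sections to reduce to lower-dimensional classifications already in the literature.
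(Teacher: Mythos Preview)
The paper does not supply its own proof of this statement; it is simply quoted as Theorem 5.6 of \cite{laza2009moduli} and used as input. Your proposed strategy --- Hilbert--Mumford numerical criterion, reduction to a finite list of maximal destabilising one-parameter subgroups via the state polytope, and case-by-case verification that each forces a singularity worse than ADE --- is exactly the approach Laza takes in the cited reference, so there is nothing to compare against here beyond noting that you have correctly identified both the method and its main difficulty (the exhaustive enumeration and singularity analysis of the strata).
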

As the stabilier group of a stable point is finite, the theorem implies the quotient stack $[U/G]$ is a Deligne-Mumford stack and $\cM \cong U/G$.  Clearly, $\cM$ is not a compact space. The GIT theory provide a natural compactification  $$\overline{\cM}:=\bP(\mathbb{C}[x_{0},x_{2},...,x_{5}]_3) \q G =\Proj(R(\bP(\mathbb{C}[x_{0},x_{2},...,x_{5}]_3),L)^G )$$of $\cM$ also by the theorem where $R(\bP(\mathbb{C}[x_{0},x_{2},...,x_{5}]_3),L)^G$ is the $G$-invariant section rings. Let $\cM^s \subset \overline{\cM}$ be the locus of GIT stable points. we call $\overline{\cM}-\cM^s$ the GIT boundary strata, which parametrize the minimal orbits of semistable cubic fourfolds. For the purpose of computation, we need to understand explicit geometry of these boundary strata. Let us recall Laza's analysis of GIT stability.
\begin{prop} [ Prop 2.6 \cite{laza2009moduli}]
  A strictly semi-stable cubic fourfold with minimal orbit have defining  equation of  the following type:
\begin{itemize}
  \item $\alpha$:  $x_{0}q_{1}(x_{2},..,x_{5})+x_{1}q_{2}(x_{2},..,x_{5})=0;$
  \item $\mu$:   $ax_{0}x_{4}^{2}+x_{0}x_{5}l_{1}(x_{2},x_{3})+bx_{1}^{2}x_{5}+x_{1}x_{4}l_{2}(x_{2},x_{3})+c(x_{2},x_{3})=0;$
  \item $\gamma$:  $x_{0}q(x_{3},..,x_{5})+x_{1}^{2}l_{1}(x_{3},...,x_{5})-2x_{1}x_{2}l_{2}(x_{3},...,x_{5})+x_{2}^{2}l_{3}(x_{3},...,x_{5})=0;$
  \item $\delta$:  $x_{0}q(x_{4},x_{5})+f(x_{1},x_{2},x_{3})=0.$
\end{itemize}
where l, q, f means linear, quadratic and cubic equations respectively. Thus, we have the following stratification $$\overline{\mathcal{M}}-\mathcal{M}^s=\alpha \cup \mu \cup \gamma \cup \delta .$$
\end{prop}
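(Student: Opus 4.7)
The plan is to apply the Hilbert-Mumford numerical criterion to $G = SL(6,\bC)$ acting on $V = \bP(\bC[x_0,\ldots,x_5]_3)$. A point $[X] \in V$ is strictly semistable with closed (minimal) orbit in the semistable locus if and only if there exists a nontrivial one-parameter subgroup $\lambda\colon \bC^* \to G$ with Mumford weight $\mu([X],\lambda) = 0$ and $\lim_{t \to 0}\lambda(t)\cdot [X] = [X]$; equivalently, the defining cubic $X$ consists entirely of weight-zero monomials under $\lambda$, and $[X]$ is fixed by $\lambda$ up to scaling.

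First I would diagonalize. After conjugation, one may assume $\lambda(t) = \diag(t^{r_0},\ldots,t^{r_5})$ with $r_0 \geq r_1 \geq \cdots \geq r_5$ and $\sum r_i = 0$. The weight of the monomial $x_0^{a_0}\cdots x_5^{a_5}$ is $\sum_i a_i r_i$, so each weight vector $r=(r_0,\ldots,r_5)$ cuts out a linear subspace $V_\lambda \subset \bC[x_0,\ldots,x_5]_3$ of weight-zero cubics whose projectivization is the locus of $\lambda$-fixed minimal-orbit semistable cubics. The classification thus reduces to enumerating, modulo the Weyl action of $S_6$ and positive scaling, the \emph{maximal} weight vectors: those for which $V_\lambda$ is nonzero and not strictly contained in $V_{\lambda'}$ for any more degenerate $\lambda'$ (equivalently, those indexing extremal strata of the strictly semistable locus).

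Solving the resulting linear system in each candidate case is direct. For the form $\alpha$, the conditions force $r_0=r_1$ and $r_2=r_3=r_4=r_5$, normalizing to $r=(2,2,-1,-1,-1,-1)$; the weight-zero cubics are exactly $x_0\,\bC[x_2,\ldots,x_5]_2 + x_1\,\bC[x_2,\ldots,x_5]_2$, which is the $\alpha$-normal form. Analogous computations for the remaining extremal weight vectors, namely $(4,1,1,-2,-2,-2)$, $(2,1,0,0,-1,-2)$, and $(2,0,0,0,-1,-1)$, produce the $\gamma$, $\mu$, and $\delta$ normal forms by writing down the monomials of weight zero.

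The main obstacle is verifying \emph{maximality}: one must rule out all other weight vectors whose zero-weight subspaces $V_\lambda$ are nonzero but strictly contained in one of the four listed ones, and check that these four are the only extremal candidates whose closed orbits supply strictly semistable strata. This is a careful but finite combinatorial analysis on the faces of the Hilbert-Mumford polytope for $SL(6)$ acting on the cubic representation, and it is precisely the content of \cite[Prop. 2.6]{laza2009moduli}. Once the four extremal weight vectors are identified, the four normal forms $\alpha,\mu,\gamma,\delta$ appear by inspection and the stratification $\overline{\cM}-\cM^s = \alpha\cup\mu\cup\gamma\cup\delta$ follows.
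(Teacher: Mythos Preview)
The paper does not supply its own proof of this proposition: it is quoted verbatim as Proposition~2.6 of \cite{laza2009moduli} and used as input for the subsequent computations. So there is no argument in the present paper to compare your proposal against.

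That said, your outline is exactly the standard Hilbert--Mumford analysis that Laza carries out in \cite{laza2009moduli}. The one-parameter subgroups you write down, namely $(2,2,-1,-1,-1,-1)$, $(4,1,1,-2,-2,-2)$, $(2,1,0,0,-1,-2)$, $(2,0,0,0,-1,-1)$, are precisely the destabilizing weights whose identity components appear later in the paper as $R_\alpha,R_\gamma,R_\mu,R_\delta$. Your identification of the ``main obstacle'' is also accurate: the substantive content of the proof is the combinatorial enumeration of maximal nonstable one-parameter subgroups modulo the Weyl action, and you are right to defer that step to Laza rather than reproduce it. One small refinement: rather than framing the search as ``maximal $V_\lambda$'', Laza organizes the classification by first listing the one-parameter subgroups that make a cubic \emph{not properly stable} (his Theorem~2.4), and then within each case determining which orbits are closed in the semistable locus via the Luna criterion; this is equivalent to what you describe but is how the argument is actually structured in the reference.
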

We still use the notation $\alpha,\delta, \cdots \mu$ to denote the boundary  strata corresponding the equation of type  $\alpha,\delta, \cdots \mu$. So we have $$\overline{\cM}-\cM=\alpha \cup \cdots \cup \mu. $$ 
The incidence relation is given by the  figure \ref{gitboundary2}
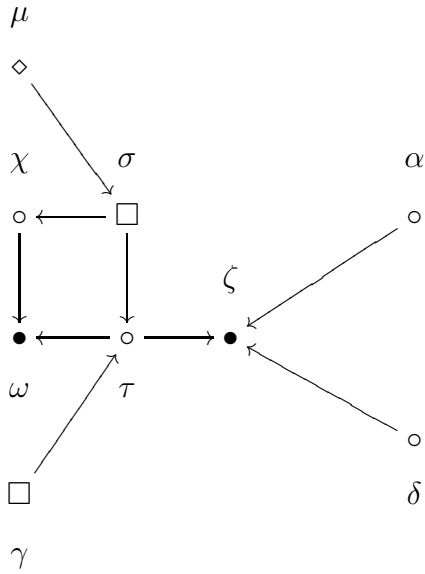
\begin{figure}[htb]
$$\xymatrix@R=.25cm{
&{\mu}   &&                  &&       \\
&{\diamond}\ar@{->}[dddr]&&          &&          \\
\\
&{\chi}&{\sigma}   &&                      &{\alpha}             \\
&{\circ}\ar@{->}[dd]  \ar@{<-}[r]&{\Box}\ar@{->}[dd]&                      &&{\circ}\ar@{->}[ddll]                \\
 &&                      &{\zeta}\\
&{\bullet}\ar@{<-}[r]&{\circ}\ar@{->}[r]                  &{\bullet}        \\
&{\omega}  &{\tau}  &                      &&                    \\
& && && {\circ}\ar@{->}[uull] \\
&{\Box}\ar@{->}[uuur]&&                       && {\delta}                     \\
&{\gamma}   &&                     &&                 \\
}
$$
\caption{ The incidence of boundary components of $\mathcal{M}$ in $\overline{\mathcal{M}}$ } \label{gitboundary2}
\end{figure}
where  $\tau $ is a curve that parametrizes cubic fourfolds with equation of the form  $$ \det\left(
 \begin{array}{ccc}
x_{0} & x_{1} & ax_{2} \\
x_{1} & x_{5} & x_{3} \\
x_{2} & x_{3} & x_{4} \\
\end{array}
\right)=0$$
 and $\zeta$ is the point representing $x_{0}x_{4}x_{5}+x_{1}x_{2}x_{3}$,
     $\omega$ is the  point representing  $$-\det \left(
              \begin{array}{ccc}
x_{0} & x_{1} & x_{2} \\
 x_{1} & x_{5} & x_{3}\\         
 x_{2} & x_{3} & x_{4} \\
 \end{array}
\right)=0
    . $$
By summary of results in \cite{laza2009moduli}, we also get the  stabilizers  of the boundary strata.
\begin{thm}[\cite{laza2009moduli}]
The stabilizers of general points  the GIT boundary strata are one of the following (up to a conjugate)
 \begin{itemize}
   \item $R_{\omega}\cong SL(3,\mathbb{C})$
   \item $R_{\zeta}\cong (\mathbb{C}^{\ast})^{4}$
   \item $R_{\chi}\cong SL(2,\mathbb{C})$
   \item $R_{\delta}\cong SO(2)(\mathbb{C})\times \{\diag(t^{-2},t,t,1,1,1):t\in \mathbb{C}^{\ast}\}$
   \item $R_{\tau}\cong \{\diag(t^{2},t,1,t^{-1},t^{-2},1):t\in \mathbb{C}^{\ast}\}\times \{\diag(t^{4},t,t,t^{-2},t^{-2},t^{-2}):t\in \mathbb{C}^{\ast}\} $
   \item $R_{\alpha}\cong \{\diag(t^{2},t^{2},t^{-1},t^{-1},t^{-1},t^{-1}):t\in \mathbb{C}^{\ast}\} $
   \item $R_{\gamma}\cong \{\diag(t^{4},t,t,t^{-2},t^{-2},t^{-2}):t\in \mathbb{C}^{\ast}\} $
   \item $R_{\mu}\cong \{\diag(t^{2},t,1,1,t^{-1},t^{-2}):t\in \mathbb{C}^{\ast}\} $
 \end{itemize}
 where $\chi$ is a curve that parametrizes cubic fourfold with equations of the form 
 $$ bx_{5}^{3}+\det\left(
                                                                        \begin{array}{ccc}
                                                                          x_{0} & x_{1} & x_{2}+2ax_{5} \\
                                                                          x_{1} & x_{2}-ax_{5} & x_{3} \\
                                                                          x_{2}+2ax_{5} & x_{3} & x_{4} \\
                                                                        \end{array}
                                                                      \right)=0.
 $$
Here $a,b\in \bC$ and the inclusion relation is given by the figure \ref{subgroup}.
 \end{thm}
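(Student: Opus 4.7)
The plan is to compute, for each GIT boundary stratum, the stabilizer in $G=SL(6,\mathbb{C})$ of a representative of a minimal semistable orbit. Given the canonical equation $F$ from the preceding proposition for each type $\alpha,\mu,\gamma,\delta,\tau,\chi,\zeta,\omega$, I would compute the subgroup $R=\{g\in SL(6,\mathbb{C}):g\cdot F=F\}$ modulo the finite center. Since each minimal orbit is projectively fixed by the Hilbert--Mumford destabilizing one-parameter subgroup $\lambda:\mathbb{C}^{\ast}\to SL(6,\mathbb{C})$ read off from the weights of $F$, the stabilizer will always contain at least $\lambda(\mathbb{C}^{\ast})$.

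For the \emph{toric} strata $\alpha,\mu,\gamma,\delta,\tau,\zeta$, the equation $F$ is a sum of a small number of monomials in $x_{0},\ldots,x_{5}$, so I would first intersect the stabilizer with the maximal torus $T\subset SL(6,\mathbb{C})$ by writing down the weight equations demanding that each monomial of $F$ scale by the same factor. For $\zeta:\ x_{0}x_{4}x_{5}+x_{1}x_{2}x_{3}$ these equations together with $\sum t_{i}=0$ cut out a rank $4$ subtorus; for $\alpha,\mu,\gamma$ the constraints force the listed $1$-dimensional subtori, and for $\tau$ one gets the product of two compatible $1$-parameter subgroups. For $\delta$ one additionally detects an $SO(2,\mathbb{C})$ factor acting linearly on $(x_{4},x_{5})$ preserving the binary quadric $q(x_{4},x_{5})$. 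Non-toral elements in the stabilizer would have to permute monomials of $F$ of equal weight, and a direct inspection (matching the lowest-weight monomials) rules out any further contribution beyond those explicitly listed.

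For the \emph{non-abelian} stabilizers $\omega$ and $\chi$, the key observation is that the canonical equation is (up to an additional $x_{5}^{3}$ term for $\chi$) the determinant of a symmetric $3\times 3$ matrix $M$ whose entries are linear in the $x_{i}$. For $\omega$, the six variables parametrize $\mathrm{Sym}^{2}(\mathbb{C}^{3})$ and $SL(3,\mathbb{C})$ acts by $M\mapsto AMA^{t}$; since $\det(AMA^{t})=(\det A)^{2}\det M=\det M$ and the induced representation $SL(3,\mathbb{C})\to GL(\mathrm{Sym}^{2}\mathbb{C}^{3})$ factors through $SL(6,\mathbb{C})$ (because $SL(3,\mathbb{C})$ is perfect, any character is trivial), this yields $SL(3,\mathbb{C})\subseteq R_{\omega}$. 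For $\chi$, the same mechanism with an extra $x_{5}$-shift inside the matrix and the $bx_{5}^{3}$ summand cuts down the $SL(3,\mathbb{C})$ symmetry to an $SL(2,\mathbb{C})$-subgroup preserving both the determinantal part and the $x_{5}^{3}$ term simultaneously; I would identify this subgroup as the block that fixes the ``$x_{5}$-direction'' in the appropriate basis.

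The main obstacle is showing that the listed subgroups are the \emph{entire} stabilizer, rather than a proper subgroup of it. This amounts to ruling out unexpected symmetries of these specific cubic forms, and I would handle it by a dimension count, comparing $\dim G-\dim(\mathrm{Orbit}\ F)$ against the dimension of the claimed stabilizer using that each boundary stratum has known dimension from Laza's description. A secondary subtlety is that $\tau$ and $\chi$ are curves depending on parameters $a,b$: one must verify that the stabilizer is constant at the generic point, while isolated special values (e.g.\ $a=0$ or $b=0$) yield strictly larger stabilizers matching the deeper strata appearing in Figure~\ref{gitboundary2}. Combined with the incidence diagram of the strata, this completes the classification.
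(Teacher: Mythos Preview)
The paper does not prove this theorem: it is stated as a summary of results from \cite{laza2009moduli}, with the sentence ``By summary of results in \cite{laza2009moduli}, we also get the stabilizers of the boundary strata'' and no further argument. So there is no proof in the paper to compare against.

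Your proposal, by contrast, is an outline of an actual proof, and the strategy is the standard and correct one: for each normal form $F$, first determine the torus part of the stabilizer by solving the weight equations, then look for non-toral symmetries coming from permutations of equal-weight monomials or, in the determinantal cases $\omega$ and $\chi$, from the $M\mapsto AMA^{t}$ action of $SL(3)$ (respectively $SL(2)$) on symmetric matrices. Your identification of $R_\omega\cong SL(3,\mathbb{C})$ via the secant--Veronese description is exactly how Laza himself proceeds (cf.\ the later use of this in Proposition~\ref{laza}). The dimension-count heuristic you propose for showing the stabilizer is no larger is a reasonable plausibility check, though in practice Laza establishes this more directly by analyzing the automorphism group of the singular locus of each normal form; you might want to supplement the dimension count with that geometric input, since a priori a dimension match does not rule out extra connected components. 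Your remark that special parameter values on the curves $\tau,\chi$ produce jumps in the stabilizer matching deeper strata is also correct and is precisely what underlies the inclusion diagram in Figure~\ref{listr}.
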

 
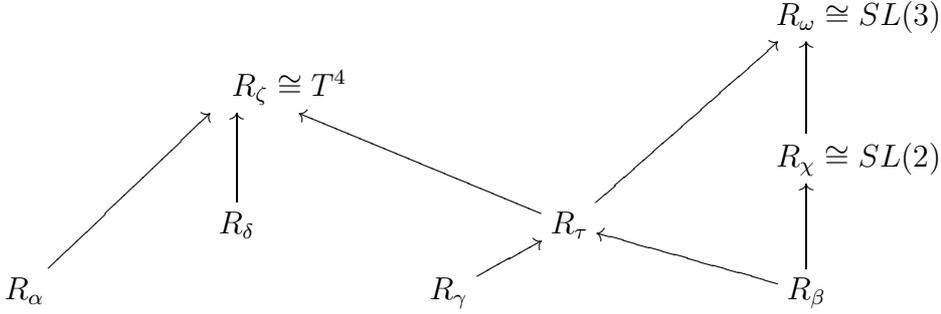
\begin{figure}[htb] \label{subgroup}
\begin{center}
$$
\xymatrix@R=.25cm{
&&&&{\ \ \ \ \ \ \ \ \ \ R_{\omega}\cong SL(3)}\ar@{<-}[dd] \ar@{<-}[dddl]&\\
&{\ \ \ \ \ \ \ \ \ \ R_{\zeta}\cong T^4}\ar@{<-}[dd] \ar@{<-}[dddl]\ar@{<-}[ddrr]\\
&&&&{\ \ \ \ \ \ \ \ \ \ R_{\chi}\cong  SL(2)}\ar@{<-}[dd] \\
&{R_{\delta}}&&{R_{\tau}}\ar@{<-}[dl]\ar@{<-}[dr]&\\
{R_{\alpha}}&&{R_{\gamma}}&&{R_{\beta}}
}
$$
\end{center}
\caption{The stabilizers of general points in the boundary strata}\label{listr}
\end{figure}
Based on the above results, we have explicit results of boundary strata.
\begin{prop} \label{3.3}
The GIT boundaries $\overline{\mathcal{M}}-\mathcal{M}$ are the following strata:
\begin{enumerate}
  \item 1-dimensional strata: $\alpha \cong \mathbb{P}^{1},\  \delta \cong \mathbb{P}^{1}, \tau \cong \mathbb{P}^{1},\  \chi \cong \mathbb{P}^{1}$.
  \item 2-dimensional strata: $\gamma $ is $\mathbb{P}^{1} \times \bC \  $,    $\ \phi \cong \mathbb{P}^{1} \times \mathbb{P}^{1} $.
  \item 3-dimensional strata: $ \mu \cong \mathbb{P}(1,3,6,8)$,  $\varepsilon \cong \mathbb{P}^{1}\times \mathbb{P}(1,2,3)  $.
\end{enumerate}
\end{prop}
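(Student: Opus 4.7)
The plan is to treat each stratum individually, using the following general principle from Kirwan's version of the Luna slice theorem applied to GIT: for each one-parameter subgroup $\lambda$ with stabilizer $R$ listed in the previous theorem, the set of minimal-orbit semistable points whose stabilizer contains (a conjugate of) $R$ is parametrized by the GIT quotient $\mathbb{P}(V^R)^{ss}/\!\!/(N(R)/R)$, where $V^R\subset \mathbb{C}[x_0,\dots,x_5]_3$ is the weight-zero eigenspace under $R$ and $N(R)$ denotes its normalizer in $G=SL(6,\mathbb{C})$. Thus for each of $\alpha,\delta,\tau,\chi,\gamma,\phi,\mu,\varepsilon$ I would carry out three explicit computations: (i) enumerate the monomials of weight zero under the listed one-parameter subgroup, giving $V^R$; (ii) compute $N(R)/R$ as a reductive group; (iii) identify the quotient.

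For the one-dimensional strata, the key observation is that the defining equation depends on a single scalar modulus. For example, for $\alpha$ the form $x_0q_1(x_2,\dots,x_5)+x_1q_2(x_2,\dots,x_5)$ defines a pencil of quadrics in $\mathbb{P}^3$; the residual group $GL(2)\times GL(4)/R_\alpha$ acts so that the quotient is determined by the cross-ratio of the four singular members of the pencil, giving $\mathbb{P}^1$. For $\chi$, $\delta$, $\tau$, the normal form already displays a single scalar parameter ($a$ or $b$), and the $N(R)/R$ action rescales it so the quotient is $\mathbb{P}^1$. For the two-dimensional strata $\gamma$ and $\phi$, I would check that $V^R$ decomposes as a biinvariant of two independent scalar parameters, and that $N(R)/R$ acts with two independent characters, producing respectively $\mathbb{P}^1\times\mathbb{C}$ and $\mathbb{P}^1\times\mathbb{P}^1$.

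The crux lies in the three-dimensional strata. For $\mu$ with $R_\mu=\{\mathrm{diag}(t^2,t,1,1,t^{-1},t^{-2})\}$, listing monomials of weight zero yields the ten parameters $a,b$, the two coefficients of $l_1$, those of $l_2$, and the four of $c(x_2,x_3)$, so $V^{R_\mu}\cong\mathbb{C}^{10}$. The centralizer $C(R_\mu)$ contains $R_\mu$, an extra torus $T^2$ scaling $(x_0,x_1,x_4,x_5)$ suitably, and a $GL(2)$ on $(x_2,x_3)$; modulo $R_\mu$ one gets a quotient group of dimension six acting on $\mathbb{P}(V^{R_\mu})$ of dimension nine. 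I would then compute the ring of $N(R_\mu)/R_\mu$-invariants on $V^{R_\mu}$ and exhibit four algebraically independent generators of weights $1,3,6,8$ (built from $a$, $b$, the determinant of the $2\times2$ matrix $(l_1,l_2)$, and the discriminant or $J$-invariant of $c$), which identifies the quotient with $\mathbb{P}(1,3,6,8)$. For $\varepsilon$, a similar but simpler analysis produces a product of a linear parameter with binary invariants of degrees $2$ and $3$ of a binary cubic, giving $\mathbb{P}^1\times\mathbb{P}(1,2,3)$.

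The main obstacle I anticipate is the bookkeeping for $\mu$: one must show both that the four invariants $a,b,\det(l_1,l_2)$, and the cubic invariant of $c$ generate the full ring (no extra relations) and that they carry exactly the degrees $1,3,6,8$ under a chosen normalization of the residual torus; failing to do so would change the weights of the weighted projective space. The $\alpha$ and $\chi$ cases will also need a brief argument to compactify the affine modulus to $\mathbb{P}^1$, which amounts to accounting for the strictly semistable limit where the pencil becomes non-generic. Once these invariant-theoretic identifications are in place, incidence among strata (already recorded in Figure \ref{gitboundary2}) will match by construction, completing the proof.
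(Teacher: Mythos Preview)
Your general framework---Luna's slice theorem identifying each stratum with $\mathbb{P}(V^R)^{ss}\q(N(R)/R)$---is exactly what the paper uses, and your treatments of $\alpha$, $\chi$, $\tau$, $\gamma$ line up with it (for $\alpha$ the paper also reads the stratum as a pencil of quadrics and computes $\Proj((Sym^4\mathbb{C})^{SL_2})\cong\mathbb{P}^1$, which is your cross-ratio). There are, however, concrete discrepancies in three places.

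For $\delta$, the normal form $x_0q(x_4,x_5)+f(x_1,x_2,x_3)$ does \emph{not} display a single scalar parameter. After normalizing the nondegenerate $q$ to $x_4x_5$, the residual datum is the full plane cubic $f(x_1,x_2,x_3)$ modulo $GL(3)$, and the paper identifies $\delta$ with the GIT moduli of plane cubics $\mathbb{P}|\mathcal{O}_{\mathbb{P}^2}(3)|\q SL(3)\cong\mathbb{P}^1$ (the $j$-line). Your sketch elides this step entirely.

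For $\mu$, your count $\dim V^{R_\mu}=10$ is correct, but the candidates $a$, $b$, $\det(l_1,l_2)$, and an invariant of $c$ are not separately invariant under the residual $GL(2)\times T$-action, so extracting weights $1,3,6,8$ from them is not as direct as you suggest. The paper sidesteps this by a two-step reduction: it first uses the $GL(2)$ on $(x_2,x_3)$ to cut $V^{R_\mu}$ down to a seven-dimensional space $\mathbb{P}^6(y_0,\dots,y_6)$ on which only a torus acts, and then writes down four minimal monomial torus invariants of degrees $1,3,6,8$. This is precisely the device that resolves the bookkeeping you flag as the main obstacle.

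For $\varepsilon$, the paper does not work through $V^R$ but instead invokes Laza's geometric parametrization: these are cubics singular along a rational normal quartic, with parameter space $Sym^4(\mathbb{P}^1)\times\mathbb{A}^3$, so the stratum is $(\mathbb{P}^4\q SL(2))\times\mathbb{P}(1,2,3)$. Your phrase ``binary invariants of degrees $2$ and $3$ of a binary cubic'' does not fit this picture---a binary cubic has a single $SL(2)$-invariant, its degree-$4$ discriminant---so that portion of your outline would need to be reworked.
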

\begin{proof}
$\chi \cong \mathbb{P}^{1}$ is shown in section 4.1 in  \cite{laza2010moduli}.  
By Lemma 4.5 in \cite{laza2009moduli}, we can write the defining equation of $\delta$ as $$x_{0}x_{4}x_{5}+f(x_{1},x_{2},x_{3})$$ and it's semi-stable iff the cubic $f(x_{1},x_{2},x_{3})$ has node at worst.  By Luna's slice theorem, $\delta$ is isomorphic to GIT quotient of plane cubics and thus
$$\delta \cong \mathbb{P}|\mathcal{O}_{\mathbb{P}^{2}}(3)|\q  SL(3) \cong \mathbb{P}^{1}.$$
For each element, say $F=x_0q_1+x_1q_2$ in $\alpha$, it can be viewed as a pencil of quadratics in $\mathbb{P}^3$. So by example 6.18 in  \cite{mukai2003introduction}, we have  $$\alpha\cong \Proj((Sym^4(\mathbb{C}))^{SL_2})=\Proj(\mathbb{C}[g_2,g_3])\cong \mathbb{P}^1.$$
For $\gamma$, note that by Lemma 4.6 in \cite{laza2009moduli}, a cubic fourfold in locus $\gamma$ has a normal form $$x_0(x_3^2-x_4x_5)+x_1^2x_4-2x_1x_2l(x_3,x_4,x_5)+x_2^2x_5 $$ and its stablizer group $\mathbb{C}^\ast=\{\ \diag(1,t^{-1},t,1,t^2,t^{-2}):\ t\in \bC^\ast \ \}$. By Luna's slice theorem, this implies that $\gamma$ is isomorphic to the quotient of $\mathbb{C}^\ast$ action on $\mathbb{A}^3(a,b,c)$  where $a,b,c$ are coefficients in the normal form of $l(x_3,x_4,x_5)=ax_3+bx_4+cx_5$,  then $$\gamma\cong \mathbb{C} \times \mathbb{P}^1.$$
For $\beta$, we first use the action $GL(2)$ on $x_2,x_3$ to reduce the problem to consider the torus action $$\diag\{\ (t^{a_0},t^{a_1},t^{a_2},t^{a_3},t^{a_4},t^{a_5}):\ a_2=a_3,\ \sum a_i=0\ \}$$ on the space $$\bP V =\bP^6(y_0,\cdots,y_6)$$  where $V$ is the vector space spanned by the monomials $$\{x_1x_3x_4,\ x_0x_4^2, x_5x_1^2,\ xx_2^3,\ x_2x_3^2, \ x_3x_2^2, x_3^3\}.$$
then we compute its invariant ring which has the  minimal generators $$y_0,\ y_1y_2y_5,\ (y_1y_2)^2y_4y_6, \ (y_1y_2)^3y_3y_6$$
and thus we obtain $\beta \cong \mathbb{P}(1,3,6,8)$. From \cite{laza2009moduli} we know that the loci $\varepsilon $  parametrizes  the cubic 4-folds singular along an irreducible rational normal curve of degree 4. By Proposition 6.6 in \cite{laza2009moduli}, the parameter space for such 4-folds is the product of $Sym^{4}(\mathbb{P}^{1})\cong \bP^4$ and affine space $\mathbb{A}(a,b,c)$ with natural action $SL(2)$ on $\mathbb{P}^{1}$ and $\mathbb{C}^{\ast}$ on $x_{5}$ in the equation above, thus $$\varepsilon \cong  \mathbb{P}^{4}\q SL(2) \times \mathbb{P}(1,2,3) \cong \mathbb{P}^{1}\times \mathbb{P}(1,2,3) .$$
\end{proof}

\subsection{Cohomology Theory} Assume $X$  is  a  smooth variety over $\bC$ with a reductive group $G$ acting on it. We are most interested in its  $G$-equivariant cohomology ( here we use singular cohomology) which captures the $G$-group action. The $i$-th equivariant cohomology $ H^i_G(X,\mathbb{Q})$ is defined by the ordinary cohomology $H^i(EG \times_G X,\mathbb{Q})$ of the topological space  $EG \times_G X$ where $EG \rightarrow BG$ is the universal principal homogeneous $G$ -space and  $EG \times_G X$ is quotient space of $EG \times X$ under action $g(h,x):=(h\cdot g^{-1},g\cdot x)$. We denote the $G$-equivariant Poincar\'e polynomials for a variety $G$ by  $$P_t^G(X):=\mathop{\sum} \limits_{i \geq 0} \dim_\bQ H_G^i(X,\bQ) \ \cdot t^i  .$$The  properties below will be frequently used in this paper  and  one may refer to \cite{hsiang2012cohomology} for more details on  general  equivariant cohomology theory.

\begin{thm}\label{eq}
With notation as above, we have
\begin{enumerate}
  \item
If the quotient $X/G$ has only quotient singularities, then $$H^i_G(X,\mathbb{Q})=H^i(X/G,\mathbb{Q})$$
  \item If the quotient space $X/G$ is contractible, then $$H^i_G(X)=H^i(BG)$$
  \item Let $F\hookrightarrow X \rightarrow B$ be a $G$-equivariant topological fibration on $X$ over the base space $B$ and $F$ is the fiber,  then we have the spectral sequence:
\begin{equation*}\label{}
  H_{G}^{p}(B,H^{q}(F,\mathbb{Q}))\Rightarrow H_{G}^{p+q}(X,\mathbb{Q}).
\end{equation*}
In particular, the spectral sequence implies
\begin{equation}\label{sps}
  P_{t}^{G}(X)=P_{t}^{G}(B)\cdot P_{t}(F).
\end{equation}
\end{enumerate}
\end{thm}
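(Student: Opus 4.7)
The plan is to work systematically with the Borel construction $X_G := EG \times_G X$ and reduce each of (1)--(3) to a routine application of a Leray/Serre spectral sequence, exploiting the two natural projections $X_G \to X/G$ and $X_G \to BG$.

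For part (1), I would consider the projection $\pi : X_G \to X/G$ coming from the second factor of $EG \times X$. The fibre of $\pi$ over a point $[x] \in X/G$ is $EG/\stab_G(x) \simeq B\stab_G(x)$. Because $X/G$ has only quotient singularities, every stabilizer $\stab_G(x)$ is finite, so its classifying space is rationally acyclic. The Leray spectral sequence for $\pi$ therefore collapses to the $q=0$ row and yields $H^i_G(X,\bQ) = H^i(X_G,\bQ) \cong H^i(X/G,\bQ)$. For part (2), applying the Borel construction to $X \to \mathrm{pt}$ gives a fibration $X \hookrightarrow X_G \to BG$ whose Serre spectral sequence collapses on the bottom row once $X$ is (rationally) contractible, producing $H^i_G(X,\bQ) = H^i(BG,\bQ)$.

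For part (3), I would apply the Borel construction to the equivariant fibration $F \hookrightarrow X \to B$ as a whole. Since $G$ acts compatibly with the projection, this yields a fibration $F \hookrightarrow X_G \to B_G$ whose fibre is still $F$; its Serre spectral sequence is exactly
\begin{equation*}
E_2^{p,q} = H^p_G(B, H^q(F,\bQ)) \Longrightarrow H^{p+q}_G(X,\bQ),
\end{equation*}
as required. The multiplicativity identity (\ref{sps}) is then the Euler-characteristic consequence that holds whenever this spectral sequence degenerates at $E_2$ and the local system $H^q(F,\bQ)$ on $B$ is trivial. The main point to watch, rather than a genuine obstacle, is precisely this degeneration hypothesis; in all the applications later in the paper the base $B$ and fibre $F$ will have cohomology concentrated in even degrees, so the spectral sequence degenerates on parity grounds.
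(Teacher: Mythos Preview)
The paper does not supply a proof of this theorem; it is stated as a list of standard facts about equivariant cohomology, with a blanket reference to Hsiang's book. Your sketch via the Borel construction and the two Leray/Serre spectral sequences $X_G \to X/G$ and $X_G \to BG$ is exactly the standard argument one would give, so there is nothing to compare against.

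Two small remarks are worth recording. In part (2) you have quietly replaced the paper's hypothesis ``$X/G$ is contractible'' by ``$X$ is (rationally) contractible''. These are not equivalent: for the rotation action of $S^1$ on $S^2$ the quotient is an interval, yet $H^\ast_{S^1}(S^2,\bQ)$ is strictly larger than $H^\ast(BS^1,\bQ)$. Your hypothesis is the one that actually makes the argument go through, and in every application in the paper the space $Z_R^{ss}$ in question is literally a single point, so the discrepancy is an imprecision in the paper's statement rather than a flaw in your proof. Likewise, your caveat in part (3) --- that the product formula $P_t^G(X)=P_t^G(B)\cdot P_t(F)$ needs degeneration of the spectral sequence and triviality of the coefficient system --- is well taken; the paper's ``in particular'' suppresses this hypothesis, but all later uses are in situations where the cohomology lives in even degrees and the parity argument you mention applies.
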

If $X$ is a singular space, we are interested in its intersection which behaves better than the usual 
cohomology. Let $IC_X\in D_c(X)$ be an intersection complex defined by   Goresky-MacPherson  \cite{IH2} , then the intersection cohomology of $X$ is defined to be the hypercohomology $$\IH^i(X,\bQ):=\bH^i(X,IC_X).$$  We refer to \cite{kirwanbook} for the definition of intersection complex and more  details on the theory of intersection cohomology. We have the blowup formula that is obtained by Kirwan.
\begin{prop}
\cite[Proposition 6.2]{kirwan1989cohomology}
Let $Z \subset X$ be a smooth $G$-subvariety with the reductive stablizer subgroup $R$. Let $\widetilde{X}:=Bl_Z(X) \rightarrow X$ be the blow up of $X$ along $Z$, then 
\begin{equation}\label{ic}
\begin{split}
    \dim \IH^i(X/G)=\dim \IH^i(\widetilde{X}/G)\\
    -\mathop{\sum} \limits_{p+q=i} \dim (H^p(Z \q N_0))\otimes H^{\lambda(q)}(\bP /R))^{\pi_0(N)}
\end{split}
    \end{equation}
where  $\lambda(q):=\begin{cases}
 q-2 & \text{if } q\leq \dim \bP /R,\\
q & \text{if } others. 
\end{cases}$
and  $\bP$ is the projection of a normal vector space of any point in $Z$, $N_0$ is the identity component of the normalizer subgroup $N$ of $R$ and $\pi_0(N):=N/N_0$. The actions of $\pi_0(N)$ on $H^p(Z \q N_0)$ and $H^{\lambda(q)}(\bP /R)$ are induced by the actions of $N$ on  $Z^{ss}$ and projective  normal bundle $\bP N_{Z/X} \rightarrow Z$ respectively.
\end{prop}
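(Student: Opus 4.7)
The plan is to apply the BBD decomposition theorem to the proper, birational morphism $\bar\pi: \widetilde{X}/G \to X/G$ induced by the blowup $\pi: \widetilde{X} = Bl_Z X \to X$. First I would record the local structure: $\pi$ is an isomorphism away from $Z$ and a $\bP^{r-1}$-bundle over $Z$ with $r = \codim_X Z$; on passing to GIT quotients, $\bar\pi$ is an isomorphism over $(X \setminus Z)/G$ and its fibers over the image $Z \q G$ of $Z$ are the quotients $\bP/R$ by the generic stabilizer $R$. Since $R$ is reductive, Luna's slice theorem provides a clean local model: a neighborhood of $Z \q G$ in $X/G$ is identified with the quotient by $R$ of a normal slice, and $\bar\pi$ is locally the $R$-quotient of the ordinary blowup of that slice at the origin. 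Applying the decomposition theorem then produces a splitting
\[
R\bar\pi_* IC_{\widetilde X/G} \;\cong\; IC_{X/G} \;\oplus\; \mathcal{S},
\]
where $\mathcal{S}$ is a direct sum of shifted $IC$ complexes supported on $Z \q G$.

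The second step is to identify $\mathcal{S}$ explicitly from the fiber geometry. Its stalk complex at a generic point of $Z \q G$ must agree, up to perverse shifts, with $H^*(\bP/R)$; and because $IC_{X/G}$ has vanishing stalks in positive perverse degree at such points, the sheaf $\mathcal{S}$ must absorb all fiber cohomology classes of positive degree. The perverse normalization of $IC$ then forces the truncation $\lambda(q) = q-2$ for $q \leq \dim \bP/R$, reflecting the shift-by-two between the constant sheaf on the smooth exceptional divisor and the $IC$ of its image downstairs, while for $q > \dim \bP/R$ the cohomology $H^q(\bP/R)$ already vanishes and the convention $\lambda(q) = q$ is only bookkeeping. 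Identifying the base $Z \q G$ with the étale quotient $(Z \q N_0)/\pi_0(N)$ via slice theory (valid since $N$ is the normalizer of the reductive group $R$), and using that $\pi_0(N)$ acts compatibly on $H^*(Z \q N_0)$ and on $H^*(\bP/R)$ through the $N$-actions on $Z^{ss}$ and on the projective normal bundle, one computes the hypercohomology of $\mathcal{S}$ as the $\pi_0(N)$-invariants of the tensor product appearing in the statement.

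Taking $\bH^i$ of the decomposition together with $\bH^i(X/G, R\bar\pi_* IC_{\widetilde X/G}) = \IH^i(\widetilde X/G)$ then yields
\[
\IH^i(\widetilde X/G) \;=\; \IH^i(X/G) \;+\; \sum_{p+q=i}\dim\bigl(H^p(Z \q N_0) \otimes H^{\lambda(q)}(\bP/R)\bigr)^{\pi_0(N)},
\]
which rearranges to the formula of the proposition. The main difficulty lies in justifying the decomposition theorem in the presence of quotient singularities on both $X/G$ and $\widetilde X/G$ and in correctly tracking the degree shifts producing $\lambda$: I would handle the former by lifting to $G$-equivariant $IC$ complexes on the smooth ambient $X$ and $\widetilde X$ (whose equivariant hypercohomology agrees with the $IH$ of the quotients, since the relevant quotients carry only finite-quotient singularities after one Kirwan blowup), and the latter by a careful stalk calculation on the normal slice, where the classical projective bundle formula for smooth blowups produces the factor $H^*(\bP/R)$ directly and makes the shift by two transparent.
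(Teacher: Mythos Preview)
The paper does not supply a proof of this proposition: it is quoted directly from Kirwan \cite[Proposition~6.2]{kirwan1989cohomology} and used as a black box in Section~\ref{se5}. There is therefore nothing in the present paper to compare your argument against.

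For what it is worth, your outline is in the spirit of Kirwan's original argument, which also rests on the decomposition theorem applied to the blow-down map on the quotients. Two points deserve more care than your sketch gives them. First, the fiber of $\bar\pi$ over a point of $Z\q G$ is not the naive orbit space $\bP/R$ but the GIT quotient $\bP^{ss}\q R$ of the projectivized normal slice; in Kirwan's setting this has only finite quotient singularities, so its $IC$ is the shifted constant sheaf and your identification of the stalks goes through, but you should say so. Second, your explanation of the shift $\lambda(q)=q-2$ is too heuristic: the precise reason this truncation appears is that $IC_{X/G}$ restricted to the center already contributes the top-degree piece of the fiber cohomology (via the perverse truncation), so the summand $\mathcal{S}$ picks up only the classes in degrees $0,\dots,\dim_{\bR}(\bP\q R)-2$, reindexed by $q\mapsto q-2$. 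The case $q>\dim(\bP\q R)$ is, as you say, vacuous. With these clarifications your argument would be a correct reconstruction of Kirwan's proof.
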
 

\subsection{Kirwan's desingularization package} \label{subse2.}
 Denote by $Z_{R}\subset X$ the locus  whose stabilizer  is $R$. Suppose there are only finitely many locus $$\{\ Z_{R_1},...,Z_{R_r}:\ \dim R_1\geq...\geq \dim R_r \ \}$$ such that all the stabilizers $R_i$ are reductive subgroups of $G$ and all $Z_R$ are smooth, then Kirwan took the blowups successively  along these locus (see \cite{kirwan1985partial},\cite{kirwan1986rational})
$$ \widetilde{X}=Bl_{ \widetilde{Z_{R_r}} } \rightarrow  \cdots \rightarrow Bl_{Z_{R_1}}X\rightarrow X$$
where $\widetilde{Z_{R_r}}$ is the strict transformations of  $Z_{R_r}$ and showed the $G$-action can be lifted to  $ \widetilde{X}$ under suitable polarization. Moreover, it commutes with the GIT quotient
\begin{equation}
    \begin{tikzcd}
       \widetilde{X} \arrow[d] \arrow[r]&  X \arrow[d]\\
     \widetilde{X} \q G \arrow[r] &   X\q G
    \end{tikzcd}
    \end{equation}
In this way, after finite steps, Kirwan  obtained a partial resolution $\widetilde{X} \q G$ of $X\q G$, which has at worst only orbifold points. We call the final blowup space the Kirwan's  desingualrization space of $X\q G$. To study the cohomology of $\widetilde{X} \q G$, Kirwan developed  several  useful cohomological formulas: 

\begin{enumerate}
    \item Cohomology formula for GIT quotient: \label{quo}
let $L$ be a  $G$ linearlised polarization. We can choose a  $G$-equivalent embedding $X\hookrightarrow \bP^N$ via $L$. Let  $T \subset G$ be a maximal torus  and $\ft $ be its Lie algebra, fix a positive Weyl chamber $\ft_{+} \subset \ft$, then define  the index set $\cB_0$ consists of  $\beta\in \ft_{+}$ such that $\beta$ is the closest point  to the origin $0$ of  the nonempty convex hull $con(\alpha_{1},...,\alpha_{m})$ generated by some weights $\alpha_{1},...,\alpha_{m}$. We call a vector in $\cB_0$ index vector. Fix a a norm $|\cdot |$ on $\ft$ (e.g, the one induced by killing form), set 
$$Z_\beta=\{[x_0,\cdots,x_N]\in X :\  x_i=0, \hbox{if}\  \alpha_i.\beta \neq |\beta|^2 \}$$
$$Y_\beta=\{[x_0,\cdots,x_N]\in X :\  x_j=0, \hbox{if}\  \alpha_j.\beta=|\beta|^2 \ \& \ \exists  \ \alpha_i.\beta \neq |\beta|^2 \},$$
then there is a natural retraction map $$p_\beta:\ Y_\beta \rightarrow Z_\beta .$$
Denote by $X^{ss}$ the semistable locus of $X$ with respect to the polarization $L$ in the sense of  Mumford's GIT and $Z_\beta^{ss}$ the locus of semistable points in $Z_\beta$ and let $$Y_\beta^{ss}:=p_\beta^{-1}(Z_\beta^{ss}),\ \ S_\beta:=G\cdot Y_\beta^{ss},$$
then combing  the theory of moment maps and relations of symplectic reduction and geometric invariant theory, it is shown in \cite{kirwan1984cohomology} that $\{S_\beta \}_{\beta\in \cB_0}$ gives $X$ a $G$-equivariant perfect Morse stratification .  In particular, for $\beta=0$, $S_0=X^{ss}$. Using such stratification, Kirwan obtained the following formula of  Poincar\'e 's polynomials, 
\begin{equation} \label{pp}
  P^{G}_{t}(X^{ss})=P_{t}(X)P_{t}(BG)- \mathop{\sum }  \limits_{0\neq \beta \in \cB_0} t^{2\codim (S_{\beta})}P^{\stab(\beta)}_{t}(Z^{ss}_{\beta}).
\end{equation}
We will call the term $\mathop{\sum }  \limits_{0\neq \beta \in \cB_0} t^{2\codim (S_{\beta})}P^{\stab(\beta)}_{t}(Z^{ss}_{\beta})$ the \textbf{removing part} in the formula \ref{pp}.
Moreover, there is a natural identification $$S_\beta \cong G \times_{P_\beta}Y_\beta^{ss}$$
where $P_\beta \leq G$ is the parabolic subgroup associated to $\beta$. In this way,  we have also have a dimension formula 
\begin{equation}\label{dim}
    \dim S_\beta= \dim G + \dim Y_\beta^{ss} - \dim P_\beta.
\end{equation}
\item Cohomology formula for blowups: assume $R$ is a reductive subgroup of $G$ with respect to the locus $Z_{R}$. We take the blow up $$\pi : \  \widetilde{X}\rightarrow X^{ss}$$ along the smooth center $G\cdot Z_{R}^{ss}$. Let $N(R)$ be the normalizer subgroup of $R$ in $G$ and $d_R$ be the complex codimension of $Z_R$ in $X$, then
 Kirwan's blowup formula \cite{kirwan1989cohomology} gives
\begin{equation}\label{3.3}
 \begin{aligned}
 P^{G}_{t}(\widetilde{X}^{ss})=P^{G}_{t}(X^{ss})+(t^{2}+...+t^{2d_{R}})P^{N(R)}_{t}(Z_{R}^{ss}) \\
-\mathop{\sum} \limits_{\beta \in \cB_{0,\rho}} t^{2\codim (S_{\beta})}P^{\stab(\beta)\cap N(R)}_{t}(Z^{ss}_{\beta,\rho}).
 \end{aligned}
\end{equation}
Here $\cB_{0,\rho}$ is the index set obtained as in (\ref{quo}) with respect to  the normal representation
\[
\rho:R \rightarrow \Aut(\mathbb{P}\mathcal{N}),
\]
where $\mathcal{N}$ is  the normal vector space  of a point in $Z_{R}^{ss}$ and $Z^{ss}_{\beta,\rho} \subset \bP\mathcal{N} $ is the associated semi-stable strata given by weight $\beta$.

Suppose there are series of blowups $$  \cdots \rightarrow X_2 \rightarrow X_1 \rightarrow  X_0=X.$$ We write the \textbf{correction terms} contributed in the formula of $i$-th blowup as follows:
\begin{equation}\label{3.4}
 \begin{aligned}
 A_{i}(t)\coloneqq & (t^{2}+...+t^{2d_{R}})P^{N(R_{\omega})}_{t}(Z_{R}^{ss}) \\
 &-\mathop{\sum} \limits_{\beta \in \cB_{0,\rho}} t^{2\codim (S_{\beta})}P^{\stab(\beta)\cap N(R_{\omega})}_{t}(Z^{ss}_{\beta,\rho}).
 \end{aligned}
\end{equation}
\end{enumerate}


\section{Cohomology of partial desingularization $\widetilde{\mathcal{M}}$} \label{se4}
In this section, we follow the discussion in section \ref{se2} to do the computation. Let  $\widetilde{\mathcal{M}}$  be the Kirwan's desingularization space of $\overline{\cM}$. Note that $\widetilde{\mathcal{M}}$ has at worst orbifold points and thus the  equivariant  cohomology of $H^\ast_G(\widetilde{\mathcal{M}})$ equals to the usual cohomology $H^\ast(\widetilde{\mathcal{M}})$ by Theorem \ref{eq}. By duality of intersection cohomology,  we will only consider the term in $P^G_{t}(\widetilde{\mathcal{M}})$ of degree lower than 20. Throughout the section, $X=\bP^{55}$ and $G=SL(6,\bC)$.
\subsection{Computations for blowups}
\subsubsection{Computation of  $P^{G}_{t}(X^{ss})$}
According to the formula \ref{pp}, we need to  describe the index set $\cB_0$. It consists of all closest points $\beta$ lying in a positive Weyl chamber $\tau_{+}$ to origin $0$ in  convex hull $con(\alpha_{1},...,\alpha_{5})$ generated by some weights $\alpha_{1},...,\alpha_{5}$.
Let $T$ be a maximal torus in $SU(6,\mathbb{C})$ and its Lie algebra is $$\mathfrak{t}=\bR^5\cong\{\diag(\sqrt{-1}\theta_{0},\sqrt{-1}\theta_{1},...,\sqrt{-1}\theta_{5}): \sum \theta_{i}=0\} .$$
Each 1-parameter subgroup is of the form $$\lambda(t)=\{\diag(t^{r_{0}},t^{r_{1}},t^{r_{2}},t^{r_{3}},t^{r_{4}},t^{r_{5}}):  \sum r_{i}=0\}$$ which   can be  identified as a vector in $\mathfrak{t}$.  For each monomial $x_0^{i_0}\cdots x_5^{i_5}$ of degree $3$, its weight with respect to a  1-parameter subgroup is 
\begin{equation*}
    i_0\theta_0+i_1\theta_1+\cdots+i_5\theta_5=(i_{1}+i-3)\cdot \theta_1+\cdots (i_{5}+i-3)\cdot \theta_5
\end{equation*}
where $i:=i_{1}+i_{2}+i_{3}+i_{4}+i_{5}$ and $(\theta_0,\cdots,\theta_5)\in \mathfrak{t}$ is the vector of 1-parameter subgroup.
So we need to identify the momonials with weight vector  $$W=\{(i_{1}+i-3,i_{2}+i-3,i_{3}+i-3,i_{4}+i-3,i_{5}+i-3):0\leq \sum i_{j}\leq3\}.$$  By choosing a positive root system $$\Phi_{+}=\{(1,-1,0,0,0),(0,1,-1,0,0),$$ $$(0,0,1,-1,0),(0,0,0,1,-1),(0,0,0,0,0,3)\}, $$ we obtain the positive Weyl chamber in $\mathfrak{t}$ as follows $$\ft_{+}=(\theta_{1},..,\theta_{5}):\theta_{1}\geq...\geq \theta_{5}\geq 0\}.$$
Given  weight vectors $\alpha_1,\cdots,\alpha_5\in W$, we can find the closest points $\beta$ from origin $0$ to convex hull $con(\alpha_{1},...,\alpha_{5})$. If $\beta \in \mathfrak{t}_+$, we keep the data $\beta$. If not, we continue to use other weight vectors. With the help of computer,  we find the only data  with codimension $<10$  is the case $\beta=(\frac{3}{5},\frac{3}{5},\frac{3}{5},\frac{3}{5},\frac{3}{5})\in \tau_{+}$. In order to get $P^{G}_{t}(X^{ss})$, we need to compute the removing part. In this case, we have
   \begin{equation*}
       \begin{split}
           \stab(\beta)=&\{\ \left(
                              \begin{array}{cc}
                                a & 0 \\
                                0 & A\\
                              \end{array}
                            \right)
                            \in SL(6,\mathbb{C}):a\cdot \det A=1,A\in GL(5,\mathbb{C})\}\\
            \cong &\mathbb{C}^{\ast}\times SL(5,\mathbb{C})
       \end{split}
   \end{equation*}
where the first factor acts trivially on $Z_{\beta}=\mathbb{P}( \mathbb{C}[x_{1},x_{2},x_{3},x_{4},x_{5}]_{3})$. Thus,
\begin{equation*}
    \begin{split}
        P^{\stab(\beta)}_{t}(Z^{ss}_{\beta})=&\frac{1}{(1-t^{2})}\cdot P^{SL(5,\mathbb{C})}_{t}(Z^{ss}_{\beta})\\
        =&\frac{1}{(1-t^{2})}\cdot P^{SL(5,\mathbb{C})}_{t}(\mathbb{P} \left( \mathbb{C}[x_{1},x_{2},x_{3},x_{4},x_{5}]_{3})^{ss} \right).
    \end{split}
\end{equation*}

\begin{center}
\begin{table}[tbp]
\centering  
\begin{tabular}{|l|c|c|c|c}  
\hline
$\beta$ & Monomials & stable($\beta$)  \\ \hline  %
$1.\ $(0.25,0.25,0.25,0.25) & $\mathbb{C}[x_{1},x_{2},x_{3},x_{4}]_{3}$ &$\left(
                            \begin{array}{cc}
                              a & 0 \\
                             0 & A \\
                            \end{array}
                            \right)$
 \\     \hline      
2. (0.9,0.8,0.7,0.6)&$\{x_{1}x_{2}^{2},x_{1}x_{3}^{2},x_{1}x_{2}x_{4}\}$ &$T^{4}$ \\    \hline     
3. (1,0,0,0) &$x_{1}\cdot\mathbb{C}[x_{2},x_{3},x_{4}]_{2}\oplus x_{0}x_{1}^{2} $&$\left(
\begin{array}{ccc}
               a & 0 & 0 \\                         
               0 & b & 0 \\                         
               0 & 0 & A \\
            \end{array}
                    \right)
$\\  \hline

4.\ (1,1,0.5, 0.5)&$\{x_{1}^{2},x_{2}^{2},x_{1}x_{2}\}\cdot \{x_{3},x_{4}\}$&$\left(                                               \begin{array}{ccc}                               a & 0 & 0 \\
0 & A & 0 \\
0 & 0 & B \\
\end{array}
\right)
$
\\   \hline
5.\ (1.08,0.84,0.72,0.36)&$\{x_{2}^{3},x_{1}x_{3}^{2},x_{4}x_{1}^{2}\}$&$T^{4}$\\   \hline
6.\ (1,1,1,0)&$\mathbb{C}[x_{1},x_{2},x_{3}]_{3}$&$SL(3,\mathbb{C})\times T^{2} $\\   \hline
7.\ (1.2,0.9,0.6,0.3)&$\{x_{2}^{3},x_{1}x_{2}x_{3},x_{4}x_{1}^{2}\}$&$T^{4}$\\  \hline
8.\ (1.25,0.75,0.75,0.25)&$\{x_{1}^{2}x_{4},x_{1}x_{2}x_{3},x_{1}x_{2}^{2},x_{1}x_{3}^{2}\}$&$\left(
                    \begin{array}{cccc}
                    a & 0& 0 & 0 \\
                    0 & b & 0& 0  \\
                    0 & 0 & c & 0\\
                    0& 0 & 0 & A \\
                    \end{array}
                    \right)
$\\
\hline
\end{tabular}
\caption{Unstable stratification.} \label{usstra}
\end{table}
\end{center}
To compute $P^{SL(5,\mathbb{C})}_{t}(\mathbb{P} \left( \mathbb{C}[x_{1},x_{2},x_{3},x_{4},x_{5}]_{3})^{ss} \right)$, we will use Kirwan's formula \ref{pp} once more since these terms can be viewed as the cohomology of GIT quotient space. As before,  we have  unstable data given in the table \ref{usstra}  with help of computer.

In the table \ref{usstra}, by choosing a suitable 1-parameter subgroup, we find some locus  $Z^{ss}_{\beta}=\phi$ and in this case, $P^{\stab(\beta)}_{t}(Z^{ss}_{\beta})=0$. Such index vector is fake. For example, 1-parameter subgroup $$\lambda(t)=\diag(1,t^{3},t^{-1},t^{-1},t^{-1})$$ will destabilize  fourth data. Checking case by case, we have only two nonzero data: 1st and 6th that make contribution.
\begin{enumerate}
    \item For 1st data, $v_{1}=\frac{1}{4}(1,1,1,1)$, by the dimension formula \ref{dim},
the codimension of unstable stratum equal to $35-(24+20-15)=6$.
By the formula \ref{pp}, we have 
$$P^{stab(v_{1})}_{t}(Z^{ss}_{v_{1}})=\frac{1}{(1-t^{2})}\cdot P^{SL(4,\mathbb{C})}_{t}(\mathbb{P}\mathbb{C}[x_{1},x_{2},x_{3},x_{4}]_{3})\ \mod t ^{20}$$
The same method give only two unstable data for $SL(4,\mathbb{C})\curvearrowright \mathbb{P}\mathbb{C}[x_{1},x_{2},x_{3},x_{4}]_{3}$:\\
\begin{enumerate}
    \item $\beta_{1}=(1,1,1)$. Such stratum has codimenion $4$ and
    \begin{equation}\label{1}
\begin{split}
    P^{\stab(\beta_{1})}_{t}(Z^{ss}_{\beta_{1}})&=\frac{1}{(1-t^{2})}\cdot P^{SL(3,\mathbb{C})}_{t}(\mathbb{P}\mathbb{C}[x_{1},x_{2},x_{3}]_{3})\\
    &=\frac{1}{(1-t^{2})} \cdot \frac{1+t^{2}+t^{10}+t^{12}}{(1-t^{4})\cdot(1-t^{6})}
\end{split}
\end{equation}
\item $\beta_{2}=(1,0,0)$. Such stratum has codimenion $5$ and so we have \begin{equation}\label{2}
 P^{\stab(\beta_{2})}_{t}(Z^{ss}_{\beta_{2}})=\frac{1+t^{2}+t^{4}+t^{6}}{(1-t^{2})\cdot(1-t^{4})}-\frac{(1+t^{2})\cdot t^{2}}{(1-t^{2})^{2}}=\frac{1}{1-t^{2}}.
\end{equation}
\end{enumerate}
Combing these data, we have
\begin{equation*}
    \begin{split}
        P^{\stab(v_{1})}_{t}(Z^{ss}_{v_{1}})=&\frac{1}{(1-t^{2})}\cdot\{\frac{1+t^{2}+...+t^{38}}{(1-t^{4})\cdot(1-t^{6})\cdot(1-t^{8})}\\
        &-\frac{t^{12}}{(1-t^{2})}- \frac{t^{8}}{(1-t^{2})} \cdot \frac{1+t^{2}+t^{10}+t^{12}}{(1-t^{4})\cdot(1-t^{6})}\}.
    \end{split}
\end{equation*}
\item For 6th data, the index vector is $v_{2}=(1,1,1,0)$ and thus we have  $$\stab(v_{2})=(\mathbb{C}^{\ast})^{2}\times SL(3,\mathbb{C}).$$ Then the codimension  of the removing strata is  $$34-(24+9-15)=16> 10.$$ Thus by the formula \ref{pp}, we have
\begin{equation*}
    \begin{split}
        P^{\stab(v_{2})}_{t}(Z^{ss}_{v_{2}})=&\frac{1}{(1-t^{2})^{2}}\cdot P^{SL(3,\mathbb{C})}_{t}(\mathbb{P}\mathbb{C}[x_{1},x_{2},x_{3}]_{3}) \ \mod t ^{20}\\
        =& \frac{1}{(1-t^{2})^{2}}\cdot \frac{1+t^{2}+t^{10}+t^{12}}{(1-t^{4})\cdot(1-t^{6})} \ \mod t ^{20}.
    \end{split}
\end{equation*}
\end{enumerate}
Observe the codimension of unstable stratification of this data is given by 
\begin{equation*}
    \begin{split}
        \codim S_{\beta}=&55-(\dim G+\dim Y_\beta-\dim P_\beta)\\
        =&55-(35+34-20)=6.
    \end{split}
\end{equation*}
Here $P_{\beta}$ is a parabolic subgroup consisting of all upper-triangle matrix and thus it has $\dim P_{\beta}=\frac{(6+1)\cdot 6}{2}-1=20$ and $$\dim Y_{\beta}=\# \{\ \alpha \in W:\alpha.\beta\geq \beta.\beta\ \}=34.$$ 
Thus by putting all the discussions above  into the formula \ref{pp}, we have 
\begin{prop}
 \begin{equation}\label{}
 \begin{aligned}
P^{G}_{t}(X^{ss})\equiv & \frac{1-t^{112}}{\mathop{\Pi} \limits_{1\leq i \leq6}(1-t^{2i})}-t^{12}P_{t}^{GL(5)}(\mathbb{P}\mathbb{C}[x_{0},x_{1},x_{2},x_{3},x_{4}]_{3})\\
 \equiv& \frac{1}{\mathop{\Pi} \limits_{1\leq i \leq6}(1-t^{2i})}-\frac{t^{12}}{1-t^2} \frac{1}{\mathop{\Pi} \limits_{1\leq i \leq 5}(1-t^{2i})}\ mod\ t^{20}.
 \end{aligned}
\end{equation}
\end{prop}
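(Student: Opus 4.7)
The plan is to apply Kirwan's stratification formula \eqref{pp} directly, noting that the problem is to control everything modulo $t^{20}$. First I would compute the two trivial ingredients: since $X = \mathbb{P}^{55}$, we have $P_t(X) = (1-t^{112})/(1-t^2)$, and since $G = SL(6,\mathbb{C})$, the classifying space cohomology gives $P_t(BG) = \prod_{i=2}^{6}(1-t^{2i})^{-1}$, so $P_t(X) \cdot P_t(BG) = (1-t^{112})/\prod_{1 \le i \le 6}(1-t^{2i})$, which is congruent modulo $t^{20}$ to $\prod_{1 \le i \le 6}(1-t^{2i})^{-1}$ after dropping the $1-t^{112}$ factor. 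This accounts for the first term.

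Next I would argue that the only nontrivial removing term contributes at order $t^{12}$. By the computer search described just before the proposition, the unique index vector $\beta \in \mathcal{B}_0$ lying in the positive Weyl chamber whose stratum $S_\beta$ has codimension strictly less than $10$ is $\beta = (3/5,3/5,3/5,3/5,3/5)$, with $\codim S_\beta = 6$ by the dimension formula \eqref{dim} (using $\dim P_\beta = 20$, $\dim Y_\beta = 34$, $\dim G = 35$). The stabilizer computation identifies $\stab(\beta) \cong \mathbb{C}^* \times SL(5,\mathbb{C})$, where the $\mathbb{C}^*$-factor acts trivially on $Z_\beta = \mathbb{P}(\mathbb{C}[x_1,\dots,x_5]_3)$; hence $P^{\stab(\beta)}_t(Z_\beta^{ss}) = (1-t^2)^{-1} \cdot P^{SL(5,\mathbb{C})}_t(\mathbb{P}(\mathbb{C}[x_1,\dots,x_5]_3)^{ss})$.

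To evaluate the $SL(5)$-factor I would iterate Kirwan's formula on cubics in $5$ variables. Using Table \ref{usstra}, only two index vectors survive after discarding fake strata: $v_1 = \tfrac14(1,1,1,1)$ of codimension $6$ and $v_2 = (1,1,1,0)$ of codimension $16$. The $v_2$-contribution is killed mod $t^{20}$ once it is multiplied by the ambient $t^{12}$-factor from $\beta$, so only $v_1$ matters to the stated precision. For $v_1$ the problem reduces to $SL(4,\mathbb{C})$ acting on $\mathbb{P}(\mathbb{C}[x_1,\dots,x_4]_3)$, which has base Poincaré series $\prod_{i=2}^4 (1-t^{2i})^{-1} \cdot (1-t^{60})/(1-t^2)$ and unstable contributions from $\beta_1 = (1,1,1)$ and $\beta_2 = (1,0,0)$ as in \eqref{1},\eqref{2}. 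Plugging these into the nested formula and then expanding, the whole removing term collapses modulo $t^{20}$ to $\dfrac{t^{12}}{1-t^2}\prod_{1\le i \le 5}(1-t^{2i})^{-1}$.

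The main obstacle I anticipate is bookkeeping rather than conceptual: one must verify carefully that every term of codimension $\ge 10$ in the inner recursions, once multiplied by $t^{6}$ for the outer stratum and by additional $t^{2d}$'s in the nested recursions, indeed lies beyond the $t^{20}$-truncation; and one must check that the apparent $t^{6}$ correction from $v_1$ and the $t^{10}$ correction from $\beta_2$ inside the recursion simplify cleanly against the factors coming from $(1-t^2)$ denominators. A useful sanity check is that after all cancellations the output depends only on the product $\prod_{1 \le i \le 5}(1-t^{2i})$, matching the shape of the classifying space cohomology for $GL(5)$, which is consistent with rewriting $t^{12}P^{SL(5)}_t(\mathbb{P}(\mathbb{C}[x_0,\dots,x_4]_3)^{ss})$ as $t^{12} P^{GL(5)}_t(\mathbb{P}(\mathbb{C}[x_0,\dots,x_4]_3))$ modulo $t^{20}$, as indicated in the first line of the proposed identity.
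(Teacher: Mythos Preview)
Your proposal is correct and follows essentially the same route as the paper: apply Kirwan's formula \eqref{pp} with $P_t(\bP^{55})P_t(BSL(6))$ as the leading term, use the computer search to isolate the unique stratum $\beta=(\tfrac35,\dots,\tfrac35)$ of codimension $6$, identify $\stab(\beta)\cong\bC^\ast\times SL(5)$ acting on $Z_\beta=\bP(\bC[x_1,\dots,x_5]_3)$, and then recurse. One small simplification you can make: you write that ``only $v_1$ matters to the stated precision,'' but in fact neither $v_1$ nor $v_2$ matters modulo $t^{20}$, since $v_1$ already carries an inner factor $t^{2\cdot 6}=t^{12}$ which, combined with the outer $t^{12}$, pushes its contribution to degree $\ge 24$. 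So the entire nested $SL(4)$ computation with $\beta_1,\beta_2$ is unnecessary at this precision, and the removing term reduces immediately to $t^{12}\cdot\tfrac{1}{1-t^2}\cdot P_t(\bP^{34})P_t(BSL(5))$, which gives the stated answer. The paper itself carries out the same superfluous recursion before collapsing, so your argument matches it closely.
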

Next, we will take the blowups successively along the locus discussed in section \ref{subse2.}.  Here we give the list of locus to be blowuped in the following  table 2.
\begin{table}[tbp]
\centering 

    \begin{tabular}{|c|c|c|}
\toprule  
Blowup locus &  stabilizer group(up to finite index) &  codimension \\
\midrule  
$G \omega$ & $SL(3,\mathbb{C})$ & 27 \\
$G \zeta$ & $T^{4}$   & 24 \\
$G Z_\chi$ &  $SO(2)$   & 21 \\
$G Z_\tau$ & T    & 20 \\
$G Z_\delta$ & $\bC^\ast \times \bC^\ast$   & 19\\
$G  Z_\alpha$ & $\bC^\ast$   &   19 \\
$G  Z_\gamma$ & $\bC^\ast$   &  18 \\
$GZ_{\beta}$ & $\bC^\ast$  &  17\\ 
\bottomrule 
\end{tabular}
\caption{List of datas to be blowed up}\label{usdata}
\end{table}

\subsubsection{Computation of $P^{G}_{t}(X_{1}^{ss})$}
 We take the blow up $$\pi : \  X_{1}\rightarrow X^{ss}$$ along $G\cdot Z_{R_{\omega}}^{ss}$.
By dimension counting, we have $$d_{R}+1=\codim G\cdot Z_{R}^{ss}$$ $$=55-(\dim G-\dim N(R_{\omega}))=55-(35-8)=28.$$
Moreover, we have
\begin{equation*}
    P^{N(R_{\omega})}_{t}(Z_{R}^{ss})=P_{t}(BN(R_{\omega}))=\frac{1}{(1-t^{4})\cdot (1-t^{6})}
\end{equation*}
 since $Z_{R_{\omega}}^{ss}$ is just a point. In \cite[Corollary 4.2]{laza2010moduli},  by using the fact the cubic $4$-fold $\omega$ is the secant variety of Veronese embedding $\mathbb{P}^2\hookrightarrow \mathbb{P}^5$, Laza proved
\begin{prop} \label{laza}
The representation of $R_{\omega}$ on the normal slice $\mathcal{N}_{\omega}$ is isomorphic to $Sym^{6}(\mathbb{C}^{3})$, where $R_{\omega}\cong SL(3,\mathbb{C})$ has the natural representation on $\mathbb{C}^{3}$. In particular, the exceptional divisor $\bP \mathcal{N}_{\omega}  \q R_\omega$ is isomorphic to the GIT quotient space of plane sextic curves.
\end{prop}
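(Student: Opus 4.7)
The plan is to identify both sides as $SL(3,\bC)$-representations via the Luna slice theorem and a plethysm decomposition, then read off the normal slice by subtraction.

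First, I would fix notation. Let $V=\bC^3$ with the standard $SL(3)$-action. The cubic form $\omega = -\det\begin{pmatrix} x_0 & x_1 & x_2 \\ x_1 & x_5 & x_3 \\ x_2 & x_3 & x_4 \end{pmatrix}$ realises $\bC^6 = \{\,3\times 3\text{ symmetric matrices}\,\} \cong \mathrm{Sym}^2 V$ as an $SL(3)$-module, which gives the embedding $R_\omega = SL(3,\bC) \hookrightarrow SL(6,\bC)$, and $\omega$ is (up to scalar) the $SL(3)$-invariant cubic $\det \in \mathrm{Sym}^3(\mathrm{Sym}^2 V)^\vee$. Via Luna's slice theorem, the normal slice $\mathcal{N}_\omega$ is the $R_\omega$-equivariant complement to the tangent of the orbit $G\cdot [\omega]$ inside $T_{[\omega]}\bP^{55}$.

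Next I would decompose the ambient tangent space $T_{[\omega]}\bP^{55} = \mathrm{Sym}^3(\mathrm{Sym}^2 V)/\bC\cdot\omega$ as an $SL(3)$-module using the classical plethysm
\begin{equation*}
  \mathrm{Sym}^3(\mathrm{Sym}^2 V) \;\cong\; \mathrm{Sym}^6 V \;\oplus\; \mathbb{S}_{(4,2)}V \;\oplus\; \bC,
\end{equation*}
where the three summands have dimensions $28, 27, 1$ (summing to $56 = \binom{8}{3}$), and the trivial summand is precisely $\bC\cdot\omega$. Thus
$T_{[\omega]}\bP^{55} \cong \mathrm{Sym}^6 V \oplus \mathbb{S}_{(4,2)}V$ as an $SL(3)$-representation.

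Then I would compute the $SL(3)$-representation on the orbit tangent $T_{[\omega]}(G\cdot[\omega]) \cong \mathfrak{sl}(6)/\mathfrak{sl}(3)$. Using $\bC^6 \cong \mathrm{Sym}^2 V$, the adjoint module decomposes via
\begin{equation*}
  \mathfrak{gl}(6) \;\cong\; \mathrm{Sym}^2 V \otimes \mathrm{Sym}^2 V^\vee \;\cong\; \mathbb{S}_{(4,2)}V \oplus \mathfrak{sl}(3) \oplus \bC,
\end{equation*}
(dimensions $27+8+1=36$ by the Weyl dimension formula), so $\mathfrak{sl}(6)/\mathfrak{sl}(3) \cong \mathbb{S}_{(4,2)}V$. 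Since the orbit map $\mathfrak{sl}(6)/\mathfrak{sl}(3) \to T_{[\omega]}\bP^{55}$ is $SL(3)$-equivariant and the target contains $\mathbb{S}_{(4,2)}V$ with multiplicity one, Schur's lemma forces the map to be an isomorphism onto that summand (the map is nonzero since the orbit has full dimension $35 - 8 = 27$). Taking the complementary summand yields $\mathcal{N}_\omega \cong \mathrm{Sym}^6 V$ as an $R_\omega$-module, and consequently $\bP\mathcal{N}_\omega \q R_\omega \cong \bP(\mathrm{Sym}^6 V)\q SL(3,\bC)$, which is by definition the GIT moduli space of plane sextic curves.

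The main obstacle is step three: carefully verifying that the $\mathbb{S}_{(4,2)}V$-summand appearing in the plethysm really coincides with the image of the orbit tangent (rather than sitting partially in the normal slice). This is handled cleanly by the Schur/multiplicity-one argument combined with the dimension count of the orbit, which together pin down the equivariant decomposition up to isomorphism. The plethysm itself and the decomposition of $\mathrm{Sym}^2 V\otimes \mathrm{Sym}^2 V^\vee$ are routine but are the computational heart of the argument.
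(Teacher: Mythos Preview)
Your argument is correct. The paper itself does not prove this proposition but attributes it to Laza, noting only that the key geometric input is the identification of the cubic fourfold $\omega$ with the secant variety of the Veronese embedding $\bP^2\hookrightarrow\bP^5$. Your proof makes this identification explicit on the representation-theoretic side: the Veronese embedding is precisely your $\bC^6\cong\mathrm{Sym}^2V$, and from there you replace any geometric reasoning about the secant variety by the plethysm $\mathrm{Sym}^3(\mathrm{Sym}^2V)\cong\mathrm{Sym}^6V\oplus\mathbb{S}_{(4,2)}V\oplus\bC$ together with the decomposition of $\mathfrak{gl}(6)\cong\mathrm{Sym}^2V\otimes\mathrm{Sym}^2V^\vee$ as an $SL(3)$-module. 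The multiplicity-one Schur argument that forces the orbit tangent to land in the $\mathbb{S}_{(4,2)}V$ summand is the step that makes your approach self-contained, whereas the paper leaves the mechanism to the cited reference. Both routes rest on the same underlying identification; yours has the advantage of being fully checkable without consulting Laza's paper, at the cost of the two plethysm computations.
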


A very helpful observation  is the following
\begin{cor} \label{cor} After the 1st blowup, the incidence relations of boundaries on exceptional divisor $\bP\mathcal{N}_{\omega} \q SL(3,\mathbb{C})$  coincide with that of GIT moduli space of degree 2 K3 surfaces, See the figure 3. That is, let $\ \chi_{1},\ \beta_{1},\  \tau_1,\  \beta_1  \subset X_{1}^{ss}/G$ be the strict transformation of  the GIT boundary after the 1st blowup along $G\cdot Z_\omega$,  then
 \begin{equation}\label{}
 \begin{split}
   E_{\omega}^{ss}/G \cap \chi_{1}= pt,\ \  E_{\omega}^{ss}/G \cap \tau_{1}= pt  \\
   E_{\omega}^{ss}/G \cap \gamma_{1}\cong |\mathcal{O}_{\mathbb{P}^{1}}(4)|//SL(2)=\mathbb{P}^1  \\
   E_{\omega}^{ss}/G \cap \beta_{1}\cong \mathbb{P}^{3}// \mathbb{C}^{\ast}=\mathbb{P}(1,2,3) .
   \end{split}
 \end{equation}
\end{cor}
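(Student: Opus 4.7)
\emph{Proof plan.} Kirwan's partial desingularisation near the orbit $G\cdot\omega$ is governed by Luna's slice theorem: the exceptional divisor $E_\omega$ satisfies $E_\omega^{ss}/G \cong \bP(\mathcal{N}_\omega)^{ss}\q R_\omega$, and Proposition \ref{laza} identifies the $R_\omega$-representation $\mathcal{N}_\omega$ with $Sym^6 \bC^3$ carrying the standard $SL(3,\bC)$-action. Hence $E_\omega^{ss}/G$ is the GIT quotient of plane sextics by $SL(3,\bC)$, which is classically identified with the moduli of degree $2$ K3 surfaces (via the double cover branched along the sextic) and whose boundary stratification was worked out by Shah. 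It therefore remains to identify, for each adjacent stratum $S \in \{\chi, \tau, \gamma, \beta\}$ appearing in Figure \ref{gitboundary2}, which sextic configurations arise from the limiting normal directions to $S$ at $\omega$.

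The uniform recipe is: pick a one-parameter family $X_t$ inside $S$ specialising to $\omega$ at $t=0$, using the Laza normal forms recorded in Proposition \ref{3.3}; take the leading coefficient in $t$ as a tangent vector in $T_\omega \bP^{55}$; project it to $\mathcal{N}_\omega$; and interpret the result as a sextic $f_S \in \bC[u,v,w]_6$ via the Veronese isomorphism of Proposition \ref{laza}. For $\chi$ and $\tau$, the stratum is a curve, so the recipe yields a single sextic up to scaling whose $SL(3)$-orbit in the exceptional divisor produces a single point of the GIT quotient of sextics. For $\gamma$, the two-parameter family of normal directions gives a family of sextics whose residual moduli is $|\cO_{\bP^1}(4)|\q SL(2) \cong \bP^1$, the classical GIT quotient of binary quartics. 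For $\beta$, the four-parameter family has residual stabiliser $\bC^\ast$ acting with weights $(1,2,3)$ on the independent invariants, yielding $\bP^3\q \bC^\ast \cong \bP(1,2,3)$.

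The principal technical obstacle is performing this projection cleanly: one must fix an $R_\omega$-equivariant complement of $T_\omega(G\cdot\omega)$ inside $T_\omega \bP^{55}$ and then translate between the ambient coordinates $x_0,\ldots,x_5$ in which Laza writes the normal forms and the Veronese coordinates $u,v,w$ on which $SL(3)$ acts standardly. Once this dictionary is fixed, each of the four outputs follows from a direct weight comparison on $Sym^6 \bC^3$. The conceptual content, that the resulting incidence pattern on $E_\omega^{ss}/G$ matches the boundary stratification of the degree $2$ K3 moduli space, is then the statement that Laza's deformations of the cubic $\omega$ pull back, via the Veronese double cover, to Shah's normal forms for degenerations of plane sextics at the triple conic.
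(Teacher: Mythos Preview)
Your outline is correct in principle, but the paper takes a different and considerably shorter route. Rather than computing each intersection $E_\omega^{ss}/G\cap S_1$ by explicitly projecting tangent directions of $Z_S$ at $\omega$ into the Veronese slice, the paper argues structurally: once Proposition~\ref{laza} identifies $E_\omega^{ss}/G$ with the GIT quotient $|\cO_{\bP^2}(6)|\q SL(3)$, the subsequent Kirwan blowups along the strict transforms $\chi_1,\tau_1,\gamma_1,\beta_1$, when restricted to $E_\omega$, \emph{are} the Kirwan partial resolution of the sextic GIT quotient. Hence the intersections $E_\omega^{ss}/G\cap S_1$ are precisely the strictly semistable loci of $|\cO_{\bP^2}(6)|\q SL(3)$, and those loci were already computed by Shah and Kirwan--Lee. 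The incidence diagram of Figure~\ref{gitboundary2} then matches each cubic-fourfold stratum to the corresponding sextic stratum by dimension and adjacency, with no coordinate work required.

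Your approach would also succeed, and has the advantage of being self-contained (it does not presuppose the Kirwan--Lee description of the sextic boundary). But it is more laborious: you must set up the Veronese dictionary, project each $T_\omega Z_S$ to the slice, and then identify the resulting locus of sextics up to $SL(3)$---the ``principal technical obstacle'' you flag is real. The paper's argument trades this computation for a citation, using that the blowup centres on the exceptional divisor are forced to be the strictly semistable loci of the slice representation, which are already tabulated in the K3 literature.
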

\begin{proof} Under the isomorphism $\bP \mathcal{N}_{\omega}  \q R_\omega \cong \bP Sym^{6}(\mathbb{C}^{3}) \q SL(3)$ in Proposition \ref{laza}, we can identify the stability on both sides.  Since after the first blowup, the rest blowups  restricting to the divisor $\bP \mathcal{N}_{\omega}  \q R_\omega$ are isomorphic to the partial resolution of $\bP \mathcal{N}_{\omega}  \q R_\omega$ in the sense of Kirwan, then the  locus of GIT  strictly semistable locus will coincide with that of the GIT moduli space of plane sextics $ |\mathcal{O}_{\mathbb{P}^{2}}(6)|\q SL(3)$. Such locus have been explicitly described in \cite{kirwan1989cohomology2}. Then  from  the incidence relation in figure \ref{gitboundary2}, we prove the assertion.
\end{proof}

\begin{figure}[tbp]
\centering  
\usetikzlibrary {calc} 
\usetikzlibrary {positioning}
 \begin{tikzpicture} 
 
\draw (0,0) rectangle (5,2); 
 \draw [blue] [thick] (0.2,1) to [edge  label=$\beta_1$](4.8,1) ;
 \draw [red] [thick] (1.5,1.8) to[edge  label=$\gamma_1$] (1.5,0.2);
\node (a) at (1.15,1) {$\chi_1$};
\fill [red] ($(a) + 1/3*(1cm,0)$) circle (2pt);
\node (b) at (3,1) {$\tau_1$};
\fill [blue] ($(b) + 1/3*(1cm,0)$) circle (2pt);
     \end{tikzpicture}
\caption{incidence relation }\label{incidenk3}
\end{figure}
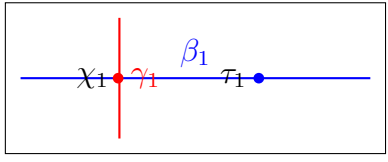

\begin{rem}
  This corollary shows in the 2nd-8th  blowups, the blowup on $ E_{\omega}^{ss}/G \cap \beta_{1}$ will be the same as that in GIT moduli space of  degree $2$ K3 surfaces, then Kirwan-Lee's results (see \cite{kirwan1989cohomology} \cite{kirwan1989cohomology2}) will help us simplify many computations.
\end{rem}

Following the computation in \cite{kirwan1989moduli} for K3 surface and formula \ref{3.4}, we have
\begin{equation}\label{3.5}
 \begin{aligned}
 A_{1}=&\frac{(t^2-t^{56})}{(1-t^{2})\cdot(1-t^{4})\cdot (1-t^{6})} \\
 &-(\frac{t^{50}-t^{56}}{(1-t^{2})\cdot(1-t^{4})(1-t^{6})}+\frac{t^{20}-t^{28}}{(1-t^{2})^3})\\
 \equiv& \frac{t^2}{(1-t^{2})\cdot(1-t^{4})\cdot (1-t^{6})}-\frac{t^{20}}{(1-t^{2})^3}\ \mod\ t^{20}.
 \end{aligned}
\end{equation}

\vspace{0.3cm}

\subsubsection{Computation of $P^{G}_{t}(X^{ss}_{2})$}

Thanks to the disjointness of orbit $G{\omega}$ and $G{\zeta}$, for the second blowup we do not need to consider the effect of the first blowup. So we take the second blowup
\begin{equation}\label{bu2}
\pi : \  X_{2}\rightarrow X_{1}^{ss}
\end{equation}
along $G\cdot Z_{R_{\zeta}}^{ss}$.  It's easy to see that the normalizer of $R_{\zeta}$ in $G=SL(6,\mathbb{C})$  is given by the extension
$$0\rightarrow T^{5} \rightarrow N(R_{\zeta}) \rightarrow S _{6}\rightarrow 0.$$ Here the symmetric group $S_{6}$ is the Weyl group of $R_{\zeta}$. Then
\begin{equation*}
    \begin{split}
        d_{R_{\zeta}}+1=&\codim(G\cdot Z_{R_{\zeta}}^{ss})\\
        =&55-(\dim G-\dim N(R_{\zeta}))=25.
    \end{split}
\end{equation*}
This gives \begin{equation*}\label{} \begin{aligned} (t^{2}+...+t^{2d_{R}})P^{N(R_{\zeta})}_{t}(Z_{R_{\zeta}}^{ss}) & =(t^{2}+...+t^{48})P_{t}(BN(R_{\zeta}))\\ 
&=\frac{(t^{2}-t^{48})}{(1-t^{2})...(1-t^{12})} \end{aligned} \end{equation*} since $G\cdot Z_{R_{\zeta}}^{ss}=G\times_{N(R_{\zeta})}Z_{R_{\zeta}}^{ss}$ and $Z_{R_{\zeta}}^{ss}$ is just a point. Actually, following a lemma of section 4.2 in \cite{kirwan1989cohomology}, we can compute the normal vector space $\mathcal{N}_{\zeta}$ at $\zeta$ as follows
\begin{equation*}
    \begin{split}
        \mathcal{N}_{\zeta}=\mathbb{C}x_{0}^{3}\oplus ...\oplus \mathbb{C}x_{5}^{3}\oplus \{x_{0}^{2},x_{4}^{2},x_{5}^{2}\}\mathbb{C}[x_{1},x_{2},x_{3}]_{1} \\
        \oplus \{x_{1}^{2},x_{2}^{2},x_{3}^{2}\} \mathbb{C}[x_{0},x_{4},x_{5}]_{1}\oplus \mathbb{C}\{x_{0}x_{4}x_{5}+x_{1}x_{2}x_{3}\}.
    \end{split}
\end{equation*}
So the intersections of exceptional divisor $E_2/N(R)$ with the proper transformation of $\alpha, \delta, \tau$ are 3 distinct points. Note that $R_{\zeta}=\{\diag(a,b,c,d,c^{-1}d^{-1},a^{-1}b^{-1}):a,b,c,d\in \mathbb{C}^{\ast}\}$ acts trivially on $x_{0}x_{4}x_{5}+x_{1}x_{2}x_{3}$. By Kirwan, the unstable data is identified with the unstable data of natural action $R_{\zeta}$ on  $\mathbb{P}\mathcal{N}_{\zeta}$. So we only consider this action. Each $1$-parameter subgroup can be written as $\diag\{t^{a_{0}},...,t^{a_{5}}\}$ and the weight is of the form
$$\{\ a\cdot I:\  x^{I}\in \mathcal{N}_{\zeta} \ \}$$ where $x^I=x_0^{i_0}\cdots x_5^{i_5}$ with $i_0+\cdots+i_5=3$ and $a\cdot I=a_oi_o+\cdots a_5i_5$. Note that in the formula \ref{3.4}, the codimension is
\begin{equation*}\label{}
 \begin{aligned}
\codim = 24-\#\{\ a\cdot I\in W:\ a\cdot I>0\}\geq 11 .
\end{aligned}
\end{equation*}
Thus, we obtain
\begin{equation}\label{}
 \begin{aligned}
A_{2}\equiv     \frac{t^{2}}{(1-t^{2})...(1-t^{12})} mod\ t^{20}.
\end{aligned}
\end{equation}

\subsubsection{Computation of $P^{G}_{t}(X^{ss}_{3})$}
   We take the third blowup $$\pi : \  X_{3}\rightarrow X_{2}^{ss}$$ along $G\cdot \widehat{Z}_{R_{\chi}}^{ss}$ where $\widehat{Z}_{R_{\chi}}^{ss}$ is the strict transformation of $Z_{R_{\chi}}^{ss}$ under composition of previous blowups, since $\chi$ contains point $\omega$ and $\zeta$. we have $$\codim(G\cdot \widehat{Z}_{R_{\chi}}^{ss})=55-(\dim G+\dim \widehat{Z}_{R_{\chi}}^{ss}-\dim N(R_{\chi}) )=23.$$
By Proposition \ref{3.3}, we have $$\widehat{Z}_{R_{\chi}}\q N(R_{\chi}) \cong \mathbb{P}(1,3) \cong \mathbb{P}^{1}. $$
This gives
\begin{equation*}\label{}
\begin{aligned}
(t^{2}+...+t^{2d_{R}})\cdot P^{N(R_{\chi})}_{t}(\widehat{Z}_{R_{\chi}}^{ss})
=&\frac{t^{2}-t^{46}}{1-t^{2}}\cdot P_{t}(BR_{\chi})\cdot P^{N(R_{\chi})/R_{\chi}}_{t}(\widehat{Z}_{R_{\chi}}^{ss})\\
=& (\frac{t^{2}-t^{46}}{1-t^{2}})\cdot \frac{1}{1-t^{4}}\cdot (1+t^{2})
\end{aligned}
\end{equation*}
since the action $N(R_{\chi})$ on $\widehat{Z}_{R_{\chi}}$ is isomorphic to the action $N(R_{\chi})/R_\chi$ on $\widehat{Z}_{R_{\chi}}$. In the same paper \cite[Lemma 4.3]{laza2010moduli}, Laza showed the normal representation $\rho:R_{\chi} \curvearrowright \mathcal{N}_{\chi}$ can be identified as  $$ SL(2) \curvearrowright H^{0}(\mathcal{O}_{\mathbb{P}^{1}}(12))\oplus H^{0}(\mathcal{O}_{\mathbb{P}^{1}}(8)). $$
 This gives 
 \begin{equation*}
     \begin{split}
         & \sum t^{2\codim (S_{\beta})}P^{stab(\beta)\cap N(R)}_{t}(Z^{ss}_{\beta,\rho})\\
         =&\frac{t^{24}+t^{26}+t^{28}+t^{30}(1+t^{2})+t^{30}+t^{34}+t^{36}+t^{38}+t^{40}(1+t^{2})}{1-t^{2}} \\
         =&\frac{t^{24}-t^{44}}{(1-t^{2})^{2}}.
     \end{split}
 \end{equation*}
By putting these together, we obtain
\begin{equation}\label{}
\begin{aligned}
A_{3}=& (\frac{t^{2}-t^{46}}{1-t^{2}})\cdot \frac{(1+t^{2})}{1-t^{4}}-\frac{t^{24}-t^{44}}{(1-t^{2})^{2}}\\
\equiv& \ \frac{t^{2}}{(1-t^{2})^2}\ mod\ t^{20}.
\end{aligned}
\end{equation}

\subsubsection{To compute  $P^{G}_{t}(X^{ss}_{4})$}
We take the 4th blowup $\pi : \  X_{4}\rightarrow X_{3}^{ss}$ along $G\cdot \widehat{Z}_{R_{\tau}}^{ss}$. Since  $N(R_\tau)=R_\tau\leq N(R_\omega)=SL(3)$, we have
$$\codim (G\cdot \widehat{Z}_{R_{\tau}}^{ss})=55-(\dim G+\dim Z_{R_\tau}-\dim N(R_\tau))=21. $$
Observe that we can identify the normal representation $R_{\tau} \curvearrowright \mathcal{N}_{\tau} \cong R_{1}\curvearrowright \mathcal{N}_{1}$ where the normal representation $R_{1}\curvearrowright \mathcal{N}_{1}$ the second blowup in \cite[Section 4 ]{kirwan1989cohomology} by corollary \ref{cor}.  Then from \cite[table 2]{kirwan1989cohomology}, we have
\begin{equation}\label{}
\begin{aligned}
A_{4}=&\frac{t^{2}-t^{42}}{1-t^{2}}\cdot P_{t}(B R_{\tau})\cdot (1+t^2) -\frac{t^{18}+t^{20}}{1-t^{2}}\ mod\ t^{20}\\
\equiv & \frac{t^{2}(1-t^2)}{(1-t^{2})^3}-\frac{t^{18}+t^{20}}{1-t^{2}}\ mod\ t^{20}
\end{aligned}
\end{equation}
where the multiplication term $1+t^2$ is due to the geometry of locus $\widetilde{Z_\tau}^{ss}/(N(R_\tau)/R_\tau) \cong \bP^1$ by proposition \ref{3.3}.

\subsubsection{To compute  $P^{G}_{t}(X^{ss}_{5})$} \label{4.1.6}
We take the blowup $\pi : \  X_{5}\rightarrow X_{4}^{ss}$ along $G\cdot \widehat{Z}^{ss}_{R_{\delta}}$.
Note that here $$Z_{R_{\delta}}=\mathbb{P}\{x_{0}q(x_{4},x_{5})+c(x_{1},...,x_{3})\}$$ where $\{x_{0}q(x_{4},x_{5})+c(x_{1},...,x_{3})\}$ means the vector space spanned by the monomials in a general polynomial of the form $x_{0}q(x_{4},x_{5})+c(x_{1},...,x_{3})$. And the normalizer subgroup of such locus is $$N(R_{\delta})=\{\diag(a,A,B):a^{-1}=|A|\cdot|B|, A\in GL(3), B \in GL(2)
 \}.$$ Thus, by the dimension counting, we have $$ \codim  GZ_{R_\delta}=55-(35+12-13)=21.$$  Observe that  $Z_{R_{\delta}}$ contains the point $\zeta
 $ which is represented by the equation $x_{0}x_{4}x_{5}+x_{1}x_{2}x_{3}$, so we need to take the blow up $$\widehat{Z}_{R_{\delta}}\rightarrow Z_{R_{\delta}}$$ along $G_{\delta}\cdot Z_{R_{\delta \zeta}}$ to compute $P^{N(R_{\delta})}_{t}(\widehat{Z}_{R_{\delta}}^{ss})$. Note from Proposition \ref{3.3}, we know the blowup $\widehat{Z}_{R_{\delta}}^{ss}/N(R)\rightarrow Z_{R_{\delta}}/N(R)=\mathbb{P}^1$ does not change cohomology, ie, $P_t(\widehat{Z}_{R_{\delta}}^{ss}/N(R))=1+t^2$. Thus $$A_5= \frac{t^2}{1-t^2}(1+t^2) \frac{1}{(1-t^2)(1-t^4)}-\sum  \ mod\ t^{20}$$
where $\sum$ is the terms due to removing the unstable strata of  representation of $R_\delta$ on the normal vector space of some point in $Z_{R_{\delta}}$ .

In order to find the weight of normal representation, we choose a point in  $Z_{\delta}$ distinct to $ \zeta $ whose equation is $F=x_{0}x_{4}x_{5}+f  $, where $f$ is a generic cubic polynomial in $x_1,x_2,x_3$. For normal representation $R_\delta$ on $ \mathcal{N}_{F}$, we take  weight $(2,0,0,0,-1,-1)$  of the maximal torus of  $R_\delta$. Here we view  the weight embedded into the  Lie algebra of $G$. By subtracting the weight from $\frac{\partial F}{\partial x_0},...,\frac{\partial F}{\partial x_5}$ and  the form $x_0q+f$,  we have weight of normal space $\mathcal{N}_{F}$ in following list:
$$\begin{tabular}{|c|c|c|c|c|c|c|}
\hline
  weight& 6& 4 & 0 & -3&-2&-1  \\
    mul& 1 & 3 & 3 & 2&6&6  \\
     \hline
 \end{tabular}$$
Thus, by formula \ref{3.4}, the removing term $\sum$ is $$P_t(\widehat{Z}_{R_{\delta}}^{ss}/N(R))\cdot (t^{2\cdot 8} P_t(\mathbb{P}^{5})+t^{2\cdot 6} P_t(\mathbb{P}^{1})) \ mod\ t^{20}$$ and  the correction term for $5$-th blowup is

\begin{equation*}\label{}
\begin{aligned}
A_5\equiv \frac{t^2}{(1-t^2)^3}-\frac{(t^{12}+...+t^{20})(1+t^2)}{1-t^2}\  \mod\ t^{20}.
\end{aligned}
\end{equation*}

\subsubsection{Computation of  $P^{G}_{t}(X^{ss}_{6})$} \label{4.1.7} We take the blowup $$\pi : \  X_{6}\rightarrow X_{5}^{ss}$$ along $G\cdot \widehat{Z}_{R_{\alpha}}^{ss}$ where $\widehat{Z}_{R_{\alpha}}^{ss}$ is the strict transformation of $Z_{R_{\alpha}}^{ss}$ under previous blowups, since $\alpha$ contains point $\zeta$. It is easy to see 
$$Z_{R_{\alpha}}=\mathbb{P}\{(x_{0}q_{0}(x_{2},...,x_{5})+x_{1}q_{1}(x_{2},...,x_{5})\},$$
$$1+d_{\alpha}=\codim G \widehat{Z}_{R_{\alpha}}=55-(35+19-19)=20,$$
$$N(R_{\alpha})=\{\diag(A,B):\det(A) \cdot \det (B) =1,A\in GL(2,\mathbb{C}),B\in GL(4,\mathbb{C})\}.$$
The blowup $$ \widehat{Z}_{R_{\alpha}}\rightarrow Z_{R_{\alpha}}^{ss}$$ along $N(R_\alpha) \zeta$ descending to quotients will not change cohomology of quotients as in the case of 5-th blowup and thus  we have the formula
$$ P^{N(R_{\alpha})}_{t}(\widehat{Z}_{R_{\alpha}}^{ss})=P_{t}(N(R_{\alpha}))(1+t^2)=\frac{1+t^2}{1-t^2}$$
since $Z_{R_{\alpha}} \q R_\alpha\cong \mathbb{P}^1$ where first identity is due to formula \ref{sps}.

To determine the normal representation of $R_\alpha$, we choose a point in $Z_{R_{\alpha}}^{ss} $ represented by  $F=x_{0}(x_{2}^{2}+x_{3}^{2}+x_{4}^{2})+x_{1}x_{5}^{2}$ which is not in the orbit $\zeta$. Then
$$F_{x_{0}}=x_{2}^{2}+x_{3}^{2}+x_{4}^{2},\ F_{x_{1}}=x_{5}^{2},\ F_{x_{2}}=2x_{0}x_{2}$$
$$F_{x_{3}}=2x_{0}x_{3},\ F_{x_{4}}=2x_{0}x_{2},\ F_{x_{5}}=2x_{1}x_{5}$$ where $F_{x_{i}}:=\frac{\partial F}{\partial x_{i}}$. It is known as before that the tangent space at $F$ is spanned by the monomials $F_{x_{0}},..F_{x_{5}}$ and  monomials in $Z_{R_\alpha}$. Subtracting from $\mathbb{C}[x_{0},x_{1},...,x_{5}]_{3}$, we obtain the normal vector space $$\mathcal{N}_{F}=\mathbb{C}[x_{0},x_{1}]_{3}\oplus span_\bC \{x_{5}V_{2}, x_{1}^{2}x_{2},x_{1}^{2}x_{3},x_{1}^{2}x_{4}\} \oplus V_{3}$$
where $V_2$ is the set of monomials in $x_2,x_3,x_4$ of degree $2$ and $V_3$ is  the vector space of monomials in $x_2,x_3,x_4$ of degree $3$  without the terms $x_2F_{x_{0}}, x_3F_{x_{0}}, x_4F_{x_{0}}$.
Recall the weight of $R_\alpha$ is $(2,2,-1,-1,-1,-1)$, then the weight of normal representation is given by
$$\begin{tabular}{|c|c|c|c|c|}
\hline
   weight &-3& 0 & 3 & 6  \\
   mul & 7 & 5 & 4 & 4  \\
     \hline
 \end{tabular}$$
 So the smallest codimension of unstable strata is $19-8=11$ and thus the removing term vanishes after $mod\ t^{20}$.

In a summary, the correction term in the $6$-th blowup contributes
\begin{equation}
  A_6\equiv \frac{t^2}{1-t^2}\cdot \frac{1+t^2}{1-t^2} \ mod\ t^{20}.
\end{equation}

\subsubsection{Computation of  $P^{G}_{t}(X^{ss}_{7})$}\label{4.1.8}
We take the blowup $$\pi : \  X_{7}\rightarrow X_{6}^{ss}$$ along $G\cdot \widehat{Z}_{R_{\gamma}}^{ss}$ where $\widehat{Z}_{R_{\gamma}}^{ss}$ is the strict transform of $Z_{R_{\gamma}}^{ss}$ under $X_{6}^{ss}\rightarrow X$.
Then the codimension of $GZ_{R_\gamma}$ is given by $$1+d_\gamma=55-(35+14-13)=19.$$
Also we have the normalizer subgroup $$N(R_{\gamma})=\{\diag(a,A,B): a\cdot \det(A) \cdot \det (B) =1,A\in GL(2,\mathbb{C}),B\in GL(3,\mathbb{C})\}.$$
By Proposition \ref{3.3}, we have $Z_{R_{\gamma}}\q G_\gamma\cong \mathbb{P}^1 \times \bC$ and blowup at two points in $Z_{R_{\gamma}}\q  G_\gamma$ will give $P_t(\widetilde{Z_{R_{\gamma}}}\q  G_\gamma)=1+3t^2$.

Now we consider the normal representation $R_{\gamma} \curvearrowright \mathcal{N}_{\gamma}$. As before, by choosing a suitable element in  $\widehat{Z}_{R_{\gamma}}^{ss}$, we compute its normal vector space $\mathcal{N}_{\gamma}$, which is a vector space spanned by monomials in the following form 
\begin{equation*}
    \begin{split}
        &\{\ x_{0}^{3},...,x_{5}^{3}, x_{0}^{2}x_{1},...,x_{0}^{2}x_{4},\\
       & x_{0}x_{1}^{2},x_{0}x_{2}^{2},x_{1}^{2}x_{2},x_{3}^{2}x_{4},x_{3}^{2}x_{5},x_{4}^{2}x_{4},x_{4}^{2}x_{3}\ \}.
    \end{split}
\end{equation*}
It can be identified as the normal representation in  the third blowup in the case of K3 surfaces of degree 2 in \cite{kirwan1989cohomology}, then
in a summary the 7th blowup contributes
\begin{equation}\label{}
\begin{aligned}
A_7\equiv &(t^2+...+t^{2d_\gamma})P_t^{N(R_\gamma})(\widetilde{Z_{R_{\gamma}}})-\sum \hbox{unstable}\ \ \mod\ t^{20}\\
\equiv &\frac{1+3t^2}{1-t^2}(t^2+t^4+...+t^{14}) \ \mod\ t^{20}.
\end{aligned}
\end{equation}
\subsubsection{Computation of $P^{G}_{t}(X^{ss}_{8})$}
We take the blowup $$\pi : \  X_{8}\rightarrow X_{7}^{ss}$$ along $G\cdot \widehat{Z}_{R_{\mu}}^{ss}$ where $\widehat{Z}_{R_{\mu}}^{ss}$ is the strict transform of $Z_{R_{\mu}}^{ss}$ under previous  blowups.  The normalizer subgroup is
\begin{equation*}
    \begin{split}
N(R_{\mu})=\{\diag(a,b,A,c,d):abcd \cdot \det(A) =1,a,b,c,d\in \mathbb{C}^{\ast}, A\in GL(2,\mathbb{C}))\}.
\end{split}
\end{equation*}
The locus $Z_{R_{\mu}}$ is identified as 
\begin{equation*}
    Z_{R_{\mu}}=\mathbb{P}\{ax_{0}x_{4}^{2}+x_{0}x_{5}l_{1}(x_{2},x_{3})+bx_{1}^{2}x_{5}+x_{1}x_{4}l_{2}(x_{2},x_{3})+c(x_{2},x_{3})\}.
\end{equation*}
Here $\{ f\}$ means the vector space spanned by monomials in $f$. Thus the codimension is
\begin{equation*}
    1+d_\mu=\codim GZ_{R_\beta}=55-(35+9-7)=18.
\end{equation*}
The blowup $\widehat{Z}_{R_{\mu}}\rightarrow Z_{R_{\mu}}$ along the orbit $N(R_{\mu})\zeta$ and $N(R_{\mu})\omega$ will decent to the blowup along two points in $Z_{R_{\mu}}\q  N(R_\mu)\cong \mathbb{P}(1,3,6,8)$.  This gives
\begin{equation}\label{}
\begin{aligned}
  P_t^{N(R_{\beta})}(\widehat{Z}_{R_{\beta}})=&P_t(BR_\beta)\cdot P_t(\widehat{Z}_{R_{\beta}}\q  N(R_{\beta}))\\
  =&\frac{1}{1-t^2}\cdot (P_t(\mathbb{P}(1,3,6,8))+(t^2+t^4)+(t^2+t^4)).
  \end{aligned}
\end{equation}
The normal representation for $R_\mu$ can be identified in K3 case as done in last blowup of Kirwan-Lee (see 5.3 in \cite{kirwan1989cohomology}), thus the removing term is giving by
\begin{equation*}\label{}
  \frac{t^{18}}{1-t^2}\ \mod\ t^{20}.
\end{equation*}
In a summary, the correction term will be given by
\begin{equation}\label{}
\begin{split}
     A_8(t)\equiv &\frac{t^2}{1-t^2}\cdot \frac{1+3t^2+3t^4+t^6}{1-t^2} \\
 &- \frac{t^{18}\cdot (1+3t^2+3t^4+t^6)}{1-t^2}\ \mod\ t^{20}.   
\end{split}
\end{equation}
\subsection{Proof of Theorem \ref{thm1.1}}
By previous computations, we have
\begin{equation*}
\begin{aligned}
  P_{t}(\widetilde{\mathcal{M}})=&P^{G}_{t}(X^{ss})+\sum_{i=1}^{8}A_{i}(t) \\
  =&1+9t^2 +26t^{4}+51t^{6}+81t^{8}+115t^{10}+152t^{12}\\
  &+193t^{14}+236t^{16}+280t^{18}+324t^{20} \  \mod\  t^{20}.
  \end{aligned}
\end{equation*}
Then the duality of intersection cohomology will imply the formula.

\section{Intersection cohomology of Baily-Borel compactification} \label{se5}
 In this section, we will compute the intersection cohomology of Baily-Borel compactification $\overline{\mathcal{D}/ \Gamma}^{BB}$ based on the computations in previous sections.
 
\subsection{Baily-Borel compactification of moduli space of cubic fourfolds}
It is well-known that for a smooth cubic fourfold $X$ its integral middle cohomology $H^4(X,\bZ)$ is isomorphic to $$\Lambda:=<1>^{\oplus 21} \oplus <-1>^{\oplus 3}.$$ Let $h:=c_1(\cO_X(1))^2\in \Lambda$ be the hyperplane class and $\Lambda_0=E_8^2 \oplus U^2\oplus A_2=h^\perp$ be the lattice associated to the smooth cubic fourfold, which is isomorphic to the  primitive cohomology $H_p^4(X,\bZ)$ of $X$.  Denote by
$$\mathcal{D}:=\{z\in \mathbb{P}(\Lambda_0\otimes\mathbb{C}):\langle z,z\rangle=0,\langle z,\overline{z}\rangle >0\}$$ the  peroid domain. It is a symmetric domain of type IV.  Let $\Gamma$ be the monodromy group of cubic fourfolds, then it is shown in \cite{beauville} that $\Gamma = O^\ast(\Lambda_0)$ is the automorphism group of $\Lambda_0$ whose elementd act trivially on the discriminant group of $\Lambda_0$. The quotient space $\mathcal{D}/ \Gamma$ is known as a locally symmetric space. By the general result of Baily-Borel in \cite{baily1966compactification}, there is a compactification of $\mathcal{D}/ \Gamma$, whose boundaries  correspond to Type  $II$, $III$ degenerations of cubic fourfolds (see also \cite{looijenga2003compactifications} for refinements ). Such compactification is well-known as Baily-Borel compactification now and we denote by $\overline{\mathcal{D}/ \Gamma}^{BB}$.  Following Hassett \cite{hassett2000special}, we define 
\begin{defn}
A cubic fourfold $X$ is called a special cubic fourfold of discriminant $d$ if it contains a surface $T$ which is not homologous to a complete intersection and the classes $h$ and $[T]$ form a saturated rank 2 sublattice of $\Lambda$ with  discriminant $d$.
\end{defn} \label{heeg}

\begin{exmp}
Let $X$ be a nodal cubic fourfold. That is, $X$ contains a point $x$ whose projective tangent cone is a smooth quadratic. Assume the coordinate of $x$ is $[0,0,0,0,1]$, then $X$ has defining equation of the following form 
\begin{equation*}
    x_5q(x_0,\cdots, x_4)+ c(x_0,\cdots, x_4)=0.
\end{equation*}
where $q$ and $c$ are quadratic and cubic forms in $x_0,\cdots, x_4$. By associateing a K3 surface of degree $6$, Hassett showed that  a nodal cubic fourfold is a special cubic fourfold of discriminant $6$.
\end{exmp}

\begin{exmp} Let $X$ be a smooth cubic fourfold containing a plane $P$, then it is  not hard to check that the class $[P]\in H^4(X,\bZ)$ with  the product of hyper plane class $h^2$ form the following  matrix 
$$\left.\begin{array}{c|c|c} 
&  h^2 & [P] \\ \hline
h^2& 3  & 1 \\   \hline
[P] & 1  & 3 \\  
\end{array} \right. .$$ Such cubic fourfold is a special cubic fourfold of discriminant $8$.
\end{exmp}

It is also shown by Hassett in \cite{hassett2000special} that the locus of special cubic fourfolds of discriminant $d$ is  nonempty for $d \equiv 0,2 \mod 6$.  Moreover, it is a divisor in the moduli space of cubic fourfolds and is called Hassett divisor now. On $\overline{\mathcal{D}/ \Gamma}^{BB}$, we can use the saturated rank 2 sublattices of $\Lambda$ with  discriminant $d$ to define a divisor wich is called a Heegner divisor of  discriminant $d$.  Let $\HH_\infty$ be such a divisor on  $\overline{\mathcal{D}/ \Gamma}^{BB}$ of discriminant $2$.
\begin{thm}(Global Torelli, see \cite{laza2010moduli} \cite{looijenga2009period}) \label{torelli}
The period map $$p: \overline{\mathcal{M}} \dashrightarrow \overline{\mathcal{D}/ \Gamma}^{BB} $$ is a birational map. It is  an open immersion over $\mathcal{M}^o$ and can be defined over $\mathcal{M}$, whose image is the complement of Heegner divisor $\HH_\infty$.
\end{thm}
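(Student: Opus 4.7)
The plan is to assemble the theorem in three stages, following the strategy initiated by Voisin and completed by Laza and Looijenga. First I construct the period map on the smooth locus $\mathcal{M}^o$ by sending $[X]$ to the Hodge structure on the primitive cohomology $H^4_p(X,\bZ)\cong \Lambda_0$; this is well-defined as a map $p^o:\mathcal{M}^o \to \mathcal{D}/\Gamma$ once one knows the monodromy group equals $\Gamma=O^*(\Lambda_0)$, which is Beauville's computation \cite{beauville}. Injectivity of $p^o$ is Voisin's global Torelli \cite{MR860684}: the argument passes through the Fano variety of lines $F(X)$, a hyperk\"ahler fourfold of $K3^{[2]}$-type whose weight-two Hodge structure recovers the weight-four Hodge structure of $X$ via the Abel--Jacobi map, after which one invokes Torelli for hyperk\"ahler manifolds to reconstruct $F(X)$ and hence $X$. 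Openness of $p^o$ then follows from an Infinitesimal Torelli computation combined with the fact that both $\mathcal{M}^o$ and $\mathcal{D}/\Gamma$ have dimension $20$.

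Next I would extend $p^o$ across $\mathcal{M}\setminus \mathcal{M}^o$. For an ADE-singular cubic $X_0$ I choose a one-parameter smoothing $\{X_t\}_{t\in \Delta}$ and consider the limiting mixed Hodge structure on the generic fiber. Because each ADE singularity germ is analytically $x_4^2+x_5^2+f(x_1,x_2,x_3)$ with $f$ a rational double point, the vanishing cycles generate a negative-definite ADE-type sublattice of $\Lambda_0$, which forces the limit period to stay inside $\mathcal{D}/\Gamma$ rather than escape to a cusp of the Baily--Borel boundary. Standard semistable-reduction arguments plus normality of $\overline{\mathcal{D}/\Gamma}^{BB}$ then promote the resulting rational map to a morphism $p:\mathcal{M}\to \overline{\mathcal{D}/\Gamma}^{BB}$, establishing the second claim.

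The third stage --- identifying the image as $\overline{\mathcal{D}/\Gamma}^{BB}\setminus \HH_\infty$ --- is where I expect the real difficulty. The strategy is to combine Hassett's classification of special cubic fourfolds with Laza's GIT analysis of the boundary strata in Proposition 2.3. The key observation is that the boundary point $\omega\in\overline{\mathcal{M}}$, which by Proposition \ref{laza} represents the chordal cubic (the secant variety of a Veronese surface in $\bP^5$), has non-isolated and hence non-ADE singularities; the period map contracts an entire exceptional divisor above $\omega$ in the Kirwan resolution onto the Heegner divisor $\HH_\infty$ of discriminant $2$, so $\HH_\infty$ lies outside $p(\mathcal{M})$. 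Conversely every other Heegner divisor (for instance the discriminant $6$ divisor of nodal cubics) meets $p(\mathcal{M})$, and birationality together with normality then forces $p(\mathcal{M})=\overline{\mathcal{D}/\Gamma}^{BB}\setminus \HH_\infty$. The hard part is matching the lattice-theoretic invariants (saturated rank-$2$ sublattices of $\Lambda$ containing $h^2$ of prescribed discriminant) with the geometric singularity types stratum by stratum; this correspondence is the technical core of \cite{laza2010moduli} and \cite{looijenga2009period}, and it is precisely the step whose output is being used throughout the rest of the present paper.
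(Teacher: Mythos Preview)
The paper does not supply its own proof of this theorem; it is quoted as background from \cite{laza2010moduli} and \cite{looijenga2009period}, with the open-immersion step additionally credited to Huybrechts--Rennemo \cite{MR3904800} in the remark immediately following. Your three-stage outline is a faithful summary of the argument in those references, so there is no in-paper proof to compare it against.

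Two small corrections to your sketch. First, the image of $\mathcal{M}$ under the extended period map is $\mathcal{D}/\Gamma \setminus \HH_\infty$, not $\overline{\mathcal{D}/\Gamma}^{BB}\setminus \HH_\infty$; cubics with at worst ADE singularities carry pure limit Hodge structures, so their periods land in the open part and never on the Baily--Borel boundary (this is precisely the content of the introduction's identification $\cM \cong \mathcal{D}/\Gamma - \HH_\infty$). Second, in your discussion of the chordal cubic $\omega$, it is not the period map $p$ that ``contracts an exceptional divisor above $\omega$ onto $\HH_\infty$'' --- the rational map $p$ is simply undefined at $\omega$. What happens instead (and what the paper exploits in the subsequent section) is that after the Kirwan blowup at $\omega$ the resolved morphism $p_2:\widehat{\mathcal{M}}\to \overline{\mathcal{D}/\Gamma}^{BB}$ sends the exceptional divisor $E_\omega$ birationally onto the closure of $\HH_\infty$.
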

Thanks to the Torelli theorem \ref{torelli}, these divisors defined in \ref{heeg} are also called Heegner divisors if we view $\mathcal{D}/ \Gamma$ as a Shimura variety. We refer the readers to \cite{MR2192012} for the definition and properties of Shimura varieties.

\begin{rem}
Recently, the property of  open immersion of period map on $\mathcal{M}^o$ is also proven by Huybrechts  and Rennemo in \cite{MR3904800} by using Jacobian rings.
\end{rem}
\subsection{Intersection cohomology}
Let $\widehat{\mathcal{M}}$ be the blowups of $X\q SL(6)$ only along the point $\omega$ and then the line $\chi$.  Then there is a natural contraction morphism 
\begin{equation*}
   f:\  \widetilde{\mathcal{M}}\  \rightarrow \ \widehat{\mathcal{M}}.
\end{equation*}
From \cite{laza2010moduli}, it is known that the period map from GIT compatification to Baily-Borel compatification is resolved by Loojigenga's semi-toric compatification (see for \cite{looijenga2003compactifications} the general  discussions of Loojigenga's semi-toric compatifications):
\begin{equation}\label{5.2}
 \begin{tikzcd}[column sep=small] & \arrow[dl,"p_1"]  \widehat{\mathcal{M}}  \arrow[dr,"p_2"] & \\ \overline{\mathcal{M}} 
\arrow[rr, dashrightarrow, "p"] & & \overline{\mathcal{D}/ \Gamma}^{BB}
\end{tikzcd}
\end{equation}
where $p_1$ is the  composition of the two successive blowups along  the point $\omega$ and then the line $\chi$. Let $ \overline{\mathcal{D}/ \Gamma}^{\Sigma(\HH_\infty)}$ be the Looijenga's  semi-toric compatification associated to the Heegner divisor $\HH_\infty$. We observe the following explicit description of birational morphism $p_2$
\begin{prop} \label{prop4.5} The morphism $p_2$ is the composition of $f:\widetilde{\mathcal{M}} \rightarrow \overline{\mathcal{D}/ \Gamma}^{\Sigma(\HH_\infty)}$  and $\nu:\overline{\mathcal{D}/ \Gamma}^{\Sigma(\HH_\infty)}\rightarrow \overline{\mathcal{D}/ \Gamma}^{BB}$. Here $f$ is the  morphism  contracting the divisor $E_\chi $ to 
\begin{equation}\label{div}
    E_\chi \cap \widetilde{E}_\omega \cong \bP(H^0(C,\cO_C(4)) \oplus H^0(C,\cO_C(6))) \q SO(3) \subset    \overline{\mathcal{D}/ \Gamma}^{\Sigma(\HH_\infty)}
\end{equation}
where $C$ is a smooth plane conic  and $\nu$ is a small modification whose boundaries maps are 
  described in the following table 3. 
\begin{table}[ht]
\caption{Contraction locus of $p_2$ }\label{tab:2}
\centering
  \begin{tabular}{|c|c|c|}
    \hline
     Exceptional locus in $\overline{\mathcal{D}/ \Gamma}^{\Sigma(\HH_\infty)}$ & Boundaries in $\overline{\mathcal{D}/ \Gamma}^{BB}$  &  fiber \\ \hline
    $\phi_\infty$ &  $A_{17}$ &  $\bP^1$\\ \hline
      $\gamma_\infty$ &  $E_7\oplus D_{10}$ &  $\bP^1$\\ \hline
     $\beta_\infty$   &  $E_8^{\oplus 2}\oplus A_2$ &  $\bP^2$ \\ \hline
     $\epsilon_\infty$    &  $A_2\oplus D_{16}$ &  $\bP^2$ \\ \hline
  \end{tabular}
\end{table} 
Here the locus  $\phi_\infty,\cdots ,\epsilon_\infty$ in table \ref{tab:2} is described  in Lemma 6.9 in \cite{laza2010moduli}.
\end{prop}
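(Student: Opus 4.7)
The plan is to combine Laza's explicit resolution of the period map from the GIT side with Looijenga's semi-toric theory on the arithmetic side. Laza showed in \cite{laza2010moduli} that after blowing up $\omega$ and then the line $\chi$ the period map extends to a morphism $p_2$ from $\widehat{\mathcal{M}}$ to $\overline{\mathcal{D}/\Gamma}^{BB}$, and that its remaining indeterminacies are controlled by the discriminant-$2$ Heegner divisor $\HH_\infty$. The Looijenga semi-toric compactification $\overline{\mathcal{D}/\Gamma}^{\Sigma(\HH_\infty)}$ is the minimal modification of $\overline{\mathcal{D}/\Gamma}^{BB}$ on which the pullback of $\HH_\infty$ becomes a Cartier divisor disjoint from the Baily-Borel boundary. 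The key observation to exploit is that $E_\chi$ is essentially the proper transform of $\HH_\infty$ on the GIT side, so Looijenga's universal property should produce the desired factorization.

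For the first factor $f$, I would construct it using this universal property, checking that the divisor $E_\chi$ on $\widehat{\mathcal{M}}$ has the correct local structure. By Laza's normal slice analysis (Lemma 4.3 of \cite{laza2010moduli}), the representation of $R_\chi \cong SL(2)$ on $\mathcal{N}_\chi$ is $H^0(\cO_{\bP^1}(12)) \oplus H^0(\cO_{\bP^1}(8))$, which matches the local structure of the Heegner divisor $\HH_\infty$ near the chordal cubic cusp. Taking the GIT quotient of $\mathcal{N}_\chi$ by $R_\chi$ and using the standard identification of even-degree binary forms with sections of powers of $\cO_C(1)$ under the symmetric-square map $SL(2) \to SO(3)$ will yield the contraction target in \eqref{div}.

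For the second factor $\nu$, I would appeal to Looijenga's general theory in \cite{looijenga2003compactifications}, which guarantees that $\nu$ is a small modification over the boundary cusps of $\overline{\mathcal{D}/\Gamma}^{BB}$, with fibers determined by the combinatorics of the toric fan attached to each cusp and the chosen support hyperplane arrangement. Matching Laza's classification of Type II and Type III degenerations of cubic fourfolds (Lemma 6.9 of \cite{laza2010moduli}) with the four cusps labeled by the lattices $A_{17}$, $E_7 \oplus D_{10}$, $E_8^{\oplus 2} \oplus A_2$ and $A_2 \oplus D_{16}$, and computing the rank of the Looijenga fan at each (rank $1$ at the first two cusps, yielding $\bP^1$ fibers; rank $2$ at the other two, yielding $\bP^2$ fibers), then gives the table after identifying the GIT-side labels $\phi_\infty, \gamma_\infty, \beta_\infty, \epsilon_\infty$ with these cusps via the boundary stratification recorded in Proposition \ref{3.3}.

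The main obstacle will be verifying the explicit identification \eqref{div}. One must restrict Laza's normal slice $\mathcal{N}_\chi$ to the locus lying on the previous exceptional divisor $\widetilde{E}_\omega$, and check that this restriction reduces the degrees from $(12,8)$ to $(4,6)$ while replacing $\bP^1$ by its image conic $C$ under the Veronese embedding induced by the quotient $SL(2) \to SO(3)$. This compatibility depends on the precise way in which the normal slice $Sym^{6}(\bC^3)$ at $\omega$ from Proposition \ref{laza} intersects the normal slice at $\chi$, and will require a careful local computation at a generic point of $Z_\chi \cap E_\omega$ together with an equivariant comparison of the two weight decompositions.
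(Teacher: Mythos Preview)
Your overall architecture matches the paper's: both invoke Laza's resolution (Section 6 of \cite{laza2010moduli}) together with Looijenga's semi-toric machinery to factor $p_2$ through $\overline{\mathcal{D}/\Gamma}^{\Sigma(\HH_\infty)}$, and both read off the small-modification data for $\nu$ from Lemma 6.9 of \cite{laza2010moduli} combined with Looijenga's Proposition 7.2.

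The genuine difference is in how you propose to establish the identification \eqref{div}. The paper does \emph{not} compute inside $\mathcal{N}_\chi$ at all. Instead it uses Corollary \ref{cor}: the exceptional divisor $E_\omega$ is the GIT moduli of plane sextics, and under this identification the subsequent blowup along $\widehat{Z}_\chi$ restricts on $E_\omega$ to Shah's first Kirwan blowup of the sextic moduli. Shah already identified that exceptional divisor as $\bP(H^0(\cO_C(4))\oplus H^0(\cO_C(6)))\q SO(3)$, so \eqref{div} is inherited rather than recomputed. This buys a clean citation in place of a local calculation. Your route via Laza's Lemma 4.3 and the symmetric-square map $SL(2)\to SO(3)$ is also valid and arguably more self-contained, but your ``main obstacle'' paragraph misdiagnoses the difficulty: the passage from degrees $(12,8)$ on $\bP^1$ to $(6,4)$ on $C$ is \emph{exactly} the Veronese identification $H^0(\cO_{\bP^1}(2d))\cong H^0(\cO_C(d))$ you already invoked, not an additional restriction. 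What actually needs checking in your approach is that $E_\chi\cap\widetilde{E}_\omega$ coincides with the general fiber of $E_\chi\to\bP^1$; the paper gets this from Looijenga's description of the blowup of the codimension-$2$ self-intersection of $\HH_\infty$ as a $\bP^1$-bundle over its center.
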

\begin{proof}
From Section 6 in  \cite{laza2010moduli}, we know that $p_2$ is the composition of a small modification in the sense of Looijenga (see \cite{looijenga2003compactifications}) and a blowup of codimension 2 self-intersection of Heegner divisor $\HH_{\infty}$. It remains to prove the codimension 2 self-intersections is isomorphic to $E_\chi \cap \widetilde{E}_\omega$. This also follows from geometric description of the resolution in Section 6 in \cite{laza2010moduli}. Indeed,  denote by $\widetilde{\HH_\infty}$ its strict transformation and then $\widetilde{\HH_\infty}$ is isomorphic to the blowup of GIT space of plane sextics at a point from  Corollary \ref{cor}. So another divisor $E_\chi$ obtained from morphism $p_1$ is isomorphic to the exceptional divisor of blowup the codiemsion 2 locus (note that such locus is irreducible by Lemma 6.8 in \cite{laza2010moduli}). As the divisor $E_\chi\rightarrow \bP^1$ is fibration, By the construction in Section 7 in  \cite{looijenga2003compactifications}, we get the center is isomorphic to the general fiber of the fibration $E_\chi \rightarrow \bP^1$,  ie, $E_\chi \cap \widetilde{E}_\omega \cong \bP(H^0(C,\cO_C(4)) \oplus H^0(C,\cO_C(6))) \q SO(3)$ where the identification is  from the first blowup of GIT space of plane sextics obtained by Shah in \cite{shah}.

The small modification is determined by the self-intersections of the Heegner divisor $\HH_\infty$ and  its intersection  with Baily-Borel boundaries:  As $\HH_\infty$ has no 1-dimensional self-intersection by Lemma 6.8 in \cite{laza2010moduli}, we can apply  Proposition 7.2 \cite{looijenga2003compactifications} and thus we know  $\nu:\overline{\mathcal{D}/ \Gamma}^{\Sigma(\HH_\infty)}\rightarrow \overline{\mathcal{D}/ \Gamma}^{BB}$ is a normalised blowup, ie, the blowup along the Baily-Borel boundaries that intersect with the self-intersections of $\HH_\infty$. The root lattices of these boundaries and codimesnion  are described in Lemma 6.9 in \cite{laza2010moduli}. Thus, we get the table \ref{tab:2}.
\end{proof}

\begin{prop}
 Let $f:\ X \rightarrow Y$ be a birational morphism of $n$ dimensional irreducible varieties over $\bC$ contracting a divisor $E$ to a lower dimensional locus $Z$ and the restriction $f_E$ of  $f: E \rightarrow Z$ is a topological $\bP^m$-bundle, then for $k\leq n$, the intersection cohomology has a decomposition 
 \begin{equation} \label{bbdg}
     \IH^k(X)\cong \IH^k(Y)\mathop{\oplus} \limits_{ 2 \leq j\leq 2m} H^{k-c+j}(Z,\bQ)
 \end{equation}
 where $c$ is the codimension of $Z$ in $X$.
\end{prop}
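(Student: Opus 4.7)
The plan is to invoke the Beilinson--Bernstein--Deligne--Gabber decomposition theorem for the proper birational morphism $f: X \to Y$ and to determine the extra summands explicitly from the $\bP^m$-bundle structure of $f|_E$. Since $f$ restricts to an isomorphism $X \setminus E \cong Y \setminus Z$, the canonical map $IC_Y \to Rf_* IC_X$ is an isomorphism away from $Z$, so the decomposition theorem yields
$$Rf_* IC_X \;\cong\; IC_Y \;\oplus\; i_*\Bigl(\bigoplus_{\alpha} IC_{W_\alpha}(L_\alpha)[s_\alpha]\Bigr),$$
where $i: Z \hookrightarrow Y$ is the inclusion and the $W_\alpha \subseteq Z$ are closed subvarieties supporting local systems $L_\alpha$.

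Next I would identify these summands by computing stalks at a general point $z \in Z$ via proper base change: $(Rf_* IC_X)_z \cong \bH^*(f^{-1}(z), IC_X|_{f^{-1}(z)})$. Since $f^{-1}(z) \cong \bP^m$ lies in the smooth locus of $X$ in the intended applications, $IC_X$ restricts to a shifted constant sheaf on the fibre, so this stalk is $H^*(\bP^m)$ up to a uniform shift. The $H^0(\bP^m)$ piece is absorbed into $IC_Y$, while each remaining even cohomology degree $j = 2, 4, \dots, 2m$ contributes exactly one extra class. Matching these fibrewise contributions against the abstract decomposition forces every $W_\alpha = Z$ (because the stalks are constant along $Z$), every $L_\alpha$ to be trivial (because the monodromy of a projective bundle acts trivially on cohomology of its fibres), and pins down the shifts $s_\alpha$ in terms of $j$ and the codimension $c = m+1$.

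Passing to hypercohomology on $Y$ then yields
$$\IH^k(X) \;=\; \bH^k(Y, Rf_* IC_X) \;\cong\; \IH^k(Y) \;\oplus\; \bigoplus_{\substack{2 \le j \le 2m \\ j \text{ even}}} \IH^{k-c+j}(Z),$$
and when $Z$ is smooth (as in the applications below), $\IH^*(Z) = H^*(Z)$, giving the stated formula. The restriction $k \le n$ avoids having to separately account for the upper-half contributions that self-duality of intersection cohomology would otherwise produce.

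The main obstacle, as I see it, is rigorously ruling out summands supported on proper closed subvarieties of $Z$ or twisted by non-trivial local systems. The $\bP^m$-bundle hypothesis supplies exactly the uniformity required: the stalk computation is independent of $z \in Z$, the monodromy action on $H^*(\bP^m)$ is trivial, and the support plus simplicity properties of perverse sheaves then force each extra summand to be $IC_Z$ with constant coefficients.
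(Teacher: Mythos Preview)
Your proposal is correct and follows essentially the same route as the paper: apply the BBDG decomposition theorem to $Rf_*IC_X$, identify the extra summands supported on $Z$ via the $\bP^m$-bundle structure of $f|_E$ (the paper phrases this as each $\aL_j$ being an irreducible summand of $R^j(f_E)_*\bQ_E$, hence rank one and trivial for $j$ even), and then take hypercohomology. Your stalk computation via proper base change and the remark on trivial monodromy of projective bundles make explicit exactly the step the paper handles by citing Grushevsky--Hulek, and your observation $c=m+1$ is consistent with the divisor hypothesis.
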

\begin{proof}
Let $IC_X$ be the intersection complex on $X$. By BBDG's decomposition theorem, there is a decomposition (non-canonical in general, see \cite{de2009decomposition} for more general results) 
\begin{equation*}
    Rf_\ast IC_X \cong IC_Y \oplus IC_Z(\aL_j) [-i]
\end{equation*}
where $\aL_j$ are  the local systems on $Z$ and $i$ is the degree to be shifted. Following \cite{grushevsky2017intersection}, we can determine these local system:  each $\aL_j$ is an irreducible summand of $R^jf_{E\ \ast}\bQ_E$ where $f_E$ is the morphism restricting on $E$.  $\aL_j$ is rank $=1$ for $j$ even since each fiber is $\bP^m$, thus 
$\aL_j=R^jf_{E\ \ast}\bQ_E$ and the shift degree is $-i=-j+c$.  then by taking cohomology of the decomposition,  
we obtain the formula \ref{bbdg}:

\begin{equation*}
    \begin{split}
        \IH^k(X)=&H^k(Y,IC_Y)\mathop{\oplus} \limits_{2 \leq j\leq 2m} R^jf_{E\ \ast}\bQ_E[-j+c])\\
        =& \IH^k(Y)\mathop{\oplus} \limits_{ 2 \leq j\leq 2m} H^k(Z,\bQ[-j+c])\\
        =& \IH^k(Y)\mathop{\oplus} \limits_{ 2 \leq j\leq 2m} H^{k-c+j}(Z,\bQ).
    \end{split}
\end{equation*}
\end{proof}
Recall that an  algebraic map $f:\ X \rightarrow Y$ is called  semismall if the defect 
\begin{equation*}
    r(f):=\max \{\ i\in \bZ:\  ^p\HH^i (Rf_\ast IC_X[n]) \neq 0\ \}
\end{equation*}
is zero.
\begin{prop} 
  Let $f:\ X \rightarrow Y$ be a semismall birational morphism of $n$ dimensional irreducible varieties over $\bC$ such that $Z\subset Y$ is a connected closed subvariety and $f$ is isomorphic outside $Z$ and over $Z$, $f$ is a $\bP^m$-bundle, then for $k\leq n$
  \begin{equation}\label{5.7}
      \IH^k(X)=\mathop{\oplus} \limits_{ 0 \leq j\leq m} H^{k+2j-n}(Z,\bQ)
  \end{equation}
  where $H^{l}(Z,\bQ)=0$ if $l<0$.
\end{prop}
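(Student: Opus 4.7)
My plan is to adapt the argument of the previous proposition by exploiting the stronger hypothesis that $f$ is semismall. Recall that semismallness is equivalent to $Rf_* IC_X$ being a semisimple perverse sheaf (concentrated in a single perverse degree), so in the BBDG decomposition no perverse shifts occur and we may write
\[
Rf_* IC_X \cong \bigoplus_\alpha IC_{\overline{S_\alpha}}(\mathcal{L}_\alpha)
\]
as perverse sheaves on $Y$, where the sum ranges over the strata of a stratification of $Y$ adapted to $f$ and each $\mathcal{L}_\alpha$ is a semisimple local system on $S_\alpha$ encoding the monodromy on the top cohomology of the fibers $f^{-1}(s)$, $s \in S_\alpha$.

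In the present setting the stratification has only two strata: the open stratum $Y \setminus Z$, on which $f$ is an isomorphism and contributes $IC_Y$ with trivial rank one coefficients, and the closed stratum $Z$, where the fiber $f^{-1}(z) \cong \mathbb{P}^m$ has trivial monodromy on each cohomology group $H^{2j}(\mathbb{P}^m,\bQ) = \bQ$ for $0 \leq j \leq m$. Following \cite{grushevsky2017intersection}, I would identify the $Z$-summands via $R^{2j}(f|_{f^{-1}(Z)})_* \bQ_{f^{-1}(Z)} = \bQ_Z$ and the semismall constraint $2m = \codim_Y Z$, which fixes the shifts so that the $m+1$ contributions from the fibers combine with the generic $IC_Y$ piece into a uniform family of copies of $IC_Z$ placed in a single perverse degree. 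Taking hypercohomology and using that $Z$ is smooth (so $IC_Z \simeq \bQ_Z[\dim Z]$ up to shift), each $j$ contributes $H^{k+2j-n}(Z,\bQ)$ to $\IH^k(X)$. Summing over $0 \leq j \leq m$ and observing that $H^l(Z,\bQ) = 0$ for $l < 0$ gives the stated formula for $k \leq n$.

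The main technical obstacle will be the careful bookkeeping of shifts. In the previous proposition the shifts were of the form $-i = -j + c$ and a summand $IC_Y$ appeared explicitly on the right hand side; in the semismall case the absence of perverse defect redistributes these shifts so that the $IC_Y$ contribution combines with those from the $\mathbb{P}^m$-bundle into the single uniform sum $\bigoplus_{0 \leq j \leq m} H^{k+2j-n}(Z,\bQ)$. Verifying this amalgamation amounts to a direct comparison of the perverse truncations of $Rf_* IC_X$ restricted to $Z$ with the sum of IC-sheaves, using proper base change for the $\mathbb{P}^m$-bundle $f|_{f^{-1}(Z)}$. Once the shifts are pinned down, the rest of the proof is a purely formal consequence of the semisimplicity of the decomposition.
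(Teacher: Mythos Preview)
Your overall strategy coincides with the paper's: use that semismallness forces $Rf_*IC_X[n]$ to be a single perverse sheaf, apply the decomposition theorem with the two-stratum stratification $Y_0=Z$, $Y_1=Y\setminus Z$, identify the local systems on $Z$ via the cohomology of the $\bP^m$-fibres, and take hypercohomology. The paper writes this as
\[
Rf_\ast IC_X \;=\; IC_Y \;\oplus\; \bigoplus_{j} \bQ_Z[n-2j],
\]
then reads off the result.

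Where your proposal diverges is the ``amalgamation'' step: you suggest that the $IC_Y$ summand can be absorbed into the family of $Z$-supported pieces to produce the uniform sum $\bigoplus_{0\le j\le m} H^{k+2j-n}(Z,\bQ)$. The paper does \emph{not} do this, and in general it cannot be done: $IC_Y$ is supported on all of $Y$ and its hypercohomology is $\IH^k(Y)$, which has no reason to be expressible as a cohomology group of $Z$. (Indeed, for $k=0$ the right-hand side of the displayed formula vanishes while $\IH^0(X)=\bQ$.) The paper's own proof keeps $IC_Y$ as a separate summand; the stated formula \eqref{5.7} is evidently missing an $\IH^k(Y)$ on the right, as is confirmed by the way it is applied later in the paper (one subtracts the $Z$-contributions from $\IP_t(X)$ to obtain $\IP_t(Y)$). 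So drop the amalgamation idea and simply record the decomposition with $IC_Y$ explicit; the rest of your argument then matches the paper's.
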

\begin{proof}
By semi-small property and semi-simplicity of the decomposition theorem, we have 
\begin{equation}
    \begin{split}
        Rf_\ast IC_X[n]=& \mathop{\oplus} \limits_{-r(f) \leq i \leq r(f)} \ ^p\HH^i (Rf_\ast IC_X[n])[-i]\\
        =& \mathop{\oplus} \limits_{j}\ IC(\overline{Y}_i,\aL_{i,j})
    \end{split}
\end{equation}
where $\aL_{i,j}$ is a local system supported on the 
closure $\overline{Y}_i$ of strata $Y_i$. In our case, there is a natural stratification $Y_0=Z, Y_1=Y-Z $, then we have 
\begin{equation}
    Rf_\ast IC_X=IC_Y\mathop{\oplus} \limits_{j=2}^{m} \ \bQ_Z[n-2j]
\end{equation}
By taking the cohomology, we obtain the result.
\end{proof}

\begin{thm}
The intersection cohomology of $\widehat{\mathcal{M}}$ is given by
\begin{equation}
\begin{split}
    \IP_t(\widehat{\mathcal{M}})=&1+3t^2+8t^4+17t^6+29t^8+44t^{10}+61t^{12}+78t^{14}\\
    &+99t^{16}+121t^{18}+151t^{20}+121t^{22}+99t^{24}+78t^{26}\\
   & +61t^{28}+44t^{30}+29t^{32}+17t^{34}+8t^{36}+3t^{38}+t^{40}
\end{split}
\end{equation}
\end{thm}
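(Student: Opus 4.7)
\bigskip

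\noindent\textbf{Proof proposal.} The plan is to apply Kirwan's intersection cohomology blow-up formula (equation \eqref{ic}) in reverse, peeling off the Kirwan blow-ups that take $\widehat{\mathcal M}$ to $\widetilde{\mathcal M}$. Since $\widehat{\mathcal M}$ is obtained from $\overline{\mathcal M}$ by blowing up only along the orbits $G\omega$ and $G Z_\chi$, the remaining six Kirwan blow-ups in Table \ref{usdata}, namely those along the strict transforms of $GZ_\zeta,\, GZ_\tau,\, GZ_\delta,\, GZ_\alpha,\, GZ_\gamma,\, GZ_\mu$, carry $\widehat{\mathcal M}$ to $\widetilde{\mathcal M}$ via a sequence
\[
\widetilde{\mathcal M} = Y_6 \longrightarrow Y_5 \longrightarrow \cdots \longrightarrow Y_1 \longrightarrow Y_0 = \widehat{\mathcal M}.
\]
Because $\widetilde{\mathcal M}$ has only finite quotient singularities, $\IH^\ast(\widetilde{\mathcal M})=H^\ast(\widetilde{\mathcal M})$, so $\IP_t(\widetilde{\mathcal M})=P_t(\widetilde{\mathcal M})$ is already given by Theorem \ref{thm1.1}. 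The goal is to subtract, step by step, the correction terms produced by \eqref{ic}.

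First, for each of the six remaining strata $R\in\{R_\zeta, R_\tau, R_\delta, R_\alpha, R_\gamma, R_\mu\}$ I will assemble the local data already gathered in \S\ref{se4}: the stabilizer $R$, its normalizer $N(R)$ with component group $\pi_0(N)$, the strict transform $\widetilde{Z}_R$ inside the intermediate $Y_{i-1}$, the quotient $\widetilde{Z}_R\q N_0$, and the normal representation $R\curvearrowright \mathcal N_R$ (which in the cases of $R_\tau, R_\gamma, R_\mu$ can be identified with normal representations appearing in the Kirwan--Lee partial desingularization of the GIT moduli of degree 2 K3 surfaces, by Corollary \ref{cor}). From these one writes down the correction summand
\[
C_R^i \;=\; \sum_{p+q=i}\dim\Bigl(H^p\bigl(\widetilde{Z}_R\q N_0\bigr)\otimes H^{\lambda(q)}(\bP\mathcal N_R/R)\Bigr)^{\pi_0(N)},
\]
with $\lambda(q)=q-2$ for $q\le\dim\bP\mathcal N_R/R$ and $\lambda(q)=q$ otherwise. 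The Poincar\'e polynomials of $\widetilde{Z}_R\q N_0$ are obtainable from Proposition \ref{3.3} together with the at-most-two-point blowups used in the computations of \S\S\ref{4.1.6}--\ref{4.1.8}; the polynomials of $\bP\mathcal N_R/R$ are computed via the same weight analyses that produced the removing parts in formula \eqref{3.4}.

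Next, I apply \eqref{ic} successively: set $\IP_t(Y_6)=P_t(\widetilde{\mathcal M})$, and for $i=6,5,\ldots,1$ define
\[
\IP_t(Y_{i-1}) \;=\; \IP_t(Y_i) \;-\; \sum_i C_{R_i}(t),
\]
where $C_{R_i}(t)$ is the generating function of the $C_{R_i}^\ast$ computed above for the $i$-th blowup. The resulting $\IP_t(Y_0)=\IP_t(\widehat{\mathcal M})$ should match the stated polynomial; by Poincar\'e duality for intersection cohomology it suffices to verify the identity in degrees $\le 20$, which is the range in which all calculations of \S\ref{se4} were carried out.

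The main obstacle will be the bookkeeping of the $\pi_0(N)$-invariants in $C_R^i$, especially for $R_\zeta$ (where $\pi_0(N(R_\zeta))=S_6$ acts nontrivially on both factors and the normal slice $\mathcal N_\zeta$ has the rich decomposition recalled in \S4.1.3) and for $R_\delta$, $R_\alpha$ (whose normalizers have nontrivial Weyl components acting on the strict transforms). In each of these cases one needs to compute the invariants of the symmetric or Weyl-group action on the cohomology of both $\widetilde{Z}_R\q N_0$ and the projectivized normal slice; once that is done, collecting the correction terms is a mechanical verification modulo $t^{20}$, exactly parallel to the calculations that produced $A_1,\ldots, A_8$ in \S\ref{se4}.
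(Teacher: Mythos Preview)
Your proposal is correct and follows essentially the same route as the paper: apply formula \eqref{ic} in reverse to undo the six Kirwan blow-ups (along $\zeta,\tau,\delta,\alpha,\gamma,\mu$) separating $\widehat{\mathcal M}$ from $\widetilde{\mathcal M}$, using Poincar\'e duality to reduce to degrees $\le 20$. The paper in fact observes that for the five strata $\mu,\gamma,\alpha,\delta,\tau$ the normalizers $N(R)$ are connected, so $\pi_0(N)$ acts trivially and the correction is simply $P_t(\widetilde{Z}_R\q N)\cdot\bigl(\text{shifted }P_t(\bP\mathcal N_R/R)\bigr)$; only the $\zeta$-step requires taking genuine $S_6$-invariants, exactly as you anticipated.
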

\begin{proof}
We will use the blowup formula  \ref{ic} of intersection cohomology reversely. Then  we need to do the calculations step by step:
\begin{enumerate}
    \item Blow down $E_\mu$: in this case, $\pi_0(N_\mu)$ acts on the fiber  trivially since $N_\mu$ is connected, thus we need to shift the polynomial by degree $2$ according to the formula \ref{ic}, then we get 
\begin{equation}
\begin{split}
    B_\mu (t)=&(1+3t^2+3t^4+t^6)\cdot (t^2+t^4+2t^6\\
    &+2t^8+3t^{10}+3t^{12}+4t^{14}+4t^{16}+4t^{18}\\
&+4t^{20}+3t^{24}+2t^{26}+2t^{28}+t^{30}+t^{32}).
\end{split}
\end{equation}
\item  Blowing down $E_\gamma$: it is similar to the case $E_\mu$. we get \begin{equation}
\begin{split}
    B_\gamma (t)=&(1+3t^2+t^4)\cdot (t^2+2t^4+3t^6+4t^8\\
    &+5t^{10}+6t^{12}+7t^{14}+8t^{16}+8t^{18}+8t^{20}+7t^{22}\\
&+6t^{24}+5t^{26}+4t^{28}+3t^{30}+2t^{32}+t^{34}).
\end{split}
\end{equation}
\item Blowing down $E_\alpha$:  it is similar to the case $E_\mu$. we get 
\begin{equation}
\begin{split}
    B_\alpha (t)=&(1+t^2)\cdot (t^2+t^4+2t^6+3t^8+4t^{10}\\ &+5t^{12}+6t^{14}+7t^{16}+8t^{18}+8t^{20}+7t^{22}+6t^{24}\\
    &+5t^{26}+4t^{28}+3t^{30}+2t^{32}++t^{34}+t^{36}).
\end{split}
\end{equation}
\item Blowing down $E_\delta$:  it is similar to the case $E_\mu$. we get 
\begin{equation}
\begin{split}
    B_\delta (t)=& (1+t^2)\cdot (t^2+2t^4+4t^6+6t^8+9t^{10}+12t^{12}+\\
    &16t^{14}+19t^{16}+24t^{18}+24t^{20}+19t^{22}+16t^{24}\\
    &+12t^{26}+9t^{28}+6t^{30}+4t^{32}+2t^{34}+t^{36}).
\end{split}
\end{equation}
\item Blowing down $E_\tau$: it is similar to the case $E_\mu$. we get 
\begin{equation}
\begin{split}
    B_\tau (t)=&(1+t^2)\cdot (t^2+t^4+2t^6+3t^8+4t^{10}\\
   & +5t^{12}+7t^{14}+8t^{16}+9t^{20}+8t^{22}+7t^{24}\\
&+5t^{26}+4t^{28}+3t^{30}+2t^{32}+t^{34}+t^{36}).
\end{split}
\end{equation}
\item Blowing down $E_\xi$: note in this case, $\pi_0(N_\xi)=S_6$ acts on $H^\ast(\bP N_\xi/R_\xi)$ by permutation of coordinates of $\bP N_\xi$,
thus, 
\begin{equation}
   \begin{split}
        \IP_t(H^\ast(\bP N_\xi/R_\xi)^{\pi_0(N_\xi)}& \equiv
        P_t(\bP N_\xi)P_t((H^\ast(BR_\xi)^{\pi_0(N_\xi)})\ \mod\ t^{19}\\
       & \equiv \frac{1}{\mathop{\Pi} \limits_{1\leq i \leq6}(1-t^{2i})}  \ \mod \ t^{19}
   \end{split}
\end{equation}
then using formula \ref{ic} again, we have
\begin{equation}
\begin{split}
    B_\xi (t)=& t^2+t^4+2t^6+3t^8+5t^{10}+7t^{12}+11t^{14}+14t^{16}\\
    &+20t^{18}+26t^{20}+20t^{22}+14t^{24}+11t^{26}\\
&+7t^{28}+5t^{30}+3t^{32}+2t^{34}+t^{36}+t^{38}
\end{split}
\end{equation}
\end{enumerate}

Put these together, we obtain our formula from $$\IP_t(\widehat{\mathcal{M}})=P_t(\cM)- B_\mu (t)-B_\xi (t)-B_\delta (t).$$
\end{proof}

\begin{rem}
 In \cite{casalaina2019cohomology}, the authors doubted whether the Kirwan resolution of moduli spaces of cubic threefolds is isomorphic to certain toroidal compactification of ball quotient $\bB/ \Gamma$ with respect to  some cone decomposition. Their evidence in \cite{casalaina2019cohomology} is that they compute the cohomology of the toroidal compactification  and find the Betti numbers of the two compactification  match perfectly. It is quite interesting to  ask whether it is also true for the moduli spaces of cubic fourfolds. 
\end{rem}

\begin{cor}
The intersection Betti numbers of $\overline{\mathcal{D}/ \Gamma}^{BB}$ are given by
\begin{equation} \label{cor4.10}
    \begin{split}
    \IP_t(\overline{\mathcal{D}/ \Gamma}^{BB})=&1+2t^2+5t^4+13t^6+24t^8+38t^{10}+54t^{12}+70t^{14}\\
    &+88t^{16}+107t^{18}+137t^{20}+107t^{22}+88t^{24}+70t^{26}\\
    &+54t^{28}+38t^{30}+24t^{32}+13t^{34}+4t^{36}+2t^{38}+t^{40}.
    \end{split}
\end{equation}

\end{cor}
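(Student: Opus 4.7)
The plan is to propagate the intersection cohomology from $\IP_t(\widehat{\mathcal{M}})$ (established in the preceding theorem) along the two-step factorisation of Proposition~\ref{prop4.5},
\[
\widehat{\mathcal{M}} \xrightarrow{\,f\,} \overline{\mathcal{D}/\Gamma}^{\Sigma(\HH_\infty)} \xrightarrow{\,\nu\,} \overline{\mathcal{D}/\Gamma}^{BB},
\]
handling each factor with the decomposition theorem of Beilinson-Bernstein-Deligne-Gabber \cite{BBDG}.

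For the small modification $\nu$, Lemma 6.8 in \cite{laza2010moduli} together with Looijenga's criterion (Proposition 7.2 in \cite{looijenga2003compactifications}) ensures that each exceptional locus in Table~\ref{tab:2} has fibre dimension at most $2$, whereas the corresponding Baily-Borel boundary sits in codimension at least $19$ inside the $20$-dimensional ambient space. Therefore $\nu$ is small in the BBDG sense and $R\nu_\ast IC_{\overline{\mathcal{D}/\Gamma}^{\Sigma(\HH_\infty)}} \cong IC_{\overline{\mathcal{D}/\Gamma}^{BB}}$, reducing the corollary to the computation of $\IP_t(\overline{\mathcal{D}/\Gamma}^{\Sigma(\HH_\infty)})$.

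For the contraction $f$, which is an isomorphism away from $E_\chi$ and collapses $E_\chi$ onto the $8$-dimensional locus $Z$ of \eqref{div}, the decomposition theorem gives
\[
Rf_\ast IC_{\widehat{\mathcal{M}}} \;\cong\; IC_{\overline{\mathcal{D}/\Gamma}^{\Sigma(\HH_\infty)}} \;\oplus\; \bigoplus_j IC_Z(\aL_j)[-s_j].
\]
Since the generic fibre of $f|_{E_\chi}$ carries the projective-bundle structure inherited from the Kirwan blowup along $\chi$, each local system $\aL_j$ reduces to the constant sheaf $\bQ_Z$ and the shifts $s_j$ are determined by the fibre dimension. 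Taking hypercohomology then yields
\[
\IP_t(\widehat{\mathcal{M}}) \;=\; \IP_t(\overline{\mathcal{D}/\Gamma}^{\Sigma(\HH_\infty)}) \;+\; \IP_t(Z)\cdot T(t),
\]
with an explicit polynomial $T(t)\in\bZ_{\geq 0}[t^2]$ recording the non-trivial fibre cohomology. Crucially, $Z$ is precisely the exceptional divisor that arises in the first step of the Kirwan desingularization of the GIT moduli of plane sextics (as used already in Corollary~\ref{cor}), so $\IP_t(Z)$ is accessible from the Kirwan-Lee computations \cite{kirwan1989cohomology, kirwan1989cohomology2}. Solving for $\IP_t(\overline{\mathcal{D}/\Gamma}^{\Sigma(\HH_\infty)})$ and combining with the first step then produces the stated formula.

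The main obstacle is to pin down the polynomial $T(t)$ and to rule out any summands of $Rf_\ast IC_{\widehat{\mathcal{M}}}$ supported on proper closed subsets of $Z$. This requires an explicit transverse-slice analysis at the more singular points of $Z$, using the toric structure of Looijenga's semi-toric resolution of the $\HH_\infty$-component (Section~6 of \cite{laza2010moduli}) to verify that the higher direct images $R^j(f|_{E_\chi})_\ast \bQ_{E_\chi}$ are trivial local systems up to shift and that no additional perverse summands with smaller-dimensional support appear.
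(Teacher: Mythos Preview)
Your overall strategy coincides with the paper's: pass from $\widehat{\mathcal{M}}$ through the semi-toric compactification and apply the decomposition theorem to each of $f$ and $\nu$ separately. However, the proposal stops well short of an actual proof. You never pin down $T(t)$, and the final paragraph openly concedes that the transverse-slice analysis and the exclusion of extra perverse summands remain to be done; that is precisely the substance of the computation. The paper carries this out concretely: it identifies $Z=E_\chi\cap\widetilde{E}_\omega$ with $\bP\bigl(H^0(\cO_{\bP^1}(8))\oplus H^0(\cO_{\bP^1}(12))\bigr)\q SL(2)$ (which has only orbifold singularities, so $\IP_t(Z)=P_t(Z)$), computes its Poincar\'e polynomial directly via Kirwan's formula~\eqref{pp} (the unstable strata have codimension $\geq 10$, so modulo $t^{18}$ one simply gets $P_t(BSL(2))P_t(\bP^{21})\equiv 1/((1-t^2)(1-t^4))$), and then reads off the $f$-correction as $(1+t^2)\bigl(P_t(E_\chi\cap\widetilde{E}_\omega)-1\bigr)$ from formula~\eqref{bbdg}.

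There are also two concrete discrepancies. First, $Z=E_\chi\cap\widetilde{E}_\omega$ is $18$-dimensional (a $\bP^{21}$ modulo the $3$-dimensional group $SL(2)$), not $8$-dimensional as you write; this matters for any codimension/fibre-dimension bookkeeping you do for $f$. Second, your treatment of $\nu$ diverges from the paper's: you argue that $\nu$ is small and hence contributes nothing, whereas the paper applies formula~\eqref{5.7} and removes the terms $2(t^{18}+t^{20})+2(t^{16}+t^{18}+t^{20})$ coming from the four Baily--Borel boundary components in Table~\ref{tab:2}. Since these corrections change the coefficients from $t^{16}$ onward, your route and the paper's cannot both land on the stated polynomial; you would need to reconcile this (either by revisiting the fibre dimensions in Table~\ref{tab:2} or by checking whether the relevant strata really are non-contributing) before the argument can be completed.
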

\begin{proof}
First by applying formula \eqref{bbdg} to morphism $f$ in  Proposition  \ref{prop4.5}, we have 
\begin{equation*}
    \IP_t(\overline{\mathcal{D}/ \Gamma}^{\Sigma(\HH_\infty)})=\IP_t(\widehat{\mathcal{M}})-(1+t^2)(P_t(E_\chi \cap \widetilde{E}_\omega)-1) \  \mod\ t^{20}.
\end{equation*}
Then it remains to compute the cohomology $E_\chi \cap \widetilde{E}_\omega$. Thanks to \ref{cor}, we identify $E_\chi \cap \widetilde{E}_\omega$ as the exceptional divisor in the  1st blow up of  GIT moduli of degree 2 K3 surfaces.  According to Section 4.2 \cite{kirwan1989cohomology}, after blowup a point  $\chi \cap E_\omega$,   we get a partial resolution of $E_\omega$ and the exceptional divisor $ E_\chi \cap \widetilde{E}_\omega \cong \bP(H^0(C,\cO_C(4)) \oplus H^0(C,\cO_C(6))) \q SO(3)$ by Proposition \ref{prop4.5}. Note that $\bP(H^0(C,\cO_C(4)) \oplus H^0(C,\cO_C(6))) \q SO(3)$ has only quotient singularities at worst. As the smooth plane conic $C$ is isomorphic to $\bP^1$, this will induces isomorphism $$\bP(H^0(\cO_C(4)) \oplus H^0(\cO_C(6))) \q SO(3)\cong \bP(H^0(\cO_{\bP^1}(8)) \oplus H^0(\cO_{\bP^1}(12))) \q SL(2).$$ Let $T=\{ \diag(t,t^{-1}):\ t\in \bC^\ast\}$ be a maximal torus of $SL(2)$ and $$[a_0,\cdots,a_8,b_0,\cdots,b_{12}]$$ be the homogeneous coordinate for $\bP (H^0(\bP^1,\cO(8)) \oplus H^0(\bP^1,\cO(12))$, then the action  $T$  on $\bP (H^0(\bP^1,\cO(8)) \oplus H^0(\bP^1,\cO(12))$ is given by  $$t\cdot [a_0,\cdots,a_8,b_0,\cdots,b_{12}]=[t^{-8}a_0,t^{-6}a_1,\cdots,t^{8}a_8,t^{-12}b_0,t^{-10}b_1,\cdots,t^{12}a_{12},].$$ so the number of weight $<0$ is $4+6=10$ and the maximal dimenion of unstable strata for action $SL(2)$ on $\bP (H^0(\bP^1,\cO(8)) \oplus H^0(\bP^1,\cO(12))$ is $9$ which implies 
\begin{equation*}
    \begin{split}
         P_t(E_\chi \cap \widetilde{E}_\omega)=&P_t^{SL(2)}(\bP(H^0(\bP^1,\cO(8)) \oplus H^0(\bP^1,\cO(12))) \q SL(2)\\
         \equiv& P_t(B SL(2))- \hbox{unstable terms}\mod t^{18}\\
         \equiv &  P_t(B SL(2)) P_t(\bP^{21}) \equiv \frac{1}{(1-t^2)(1-t^4)}\ \mod t^{18}.
    \end{split}
\end{equation*}
Thus we have 
\begin{equation*}
   \begin{split}
        P_t(E_\chi \cap \widetilde{E}_\omega)=&1+t^2+2t^4+2t^6+3t^8+3t^{10}+4t^{12} \\
    &+4t^{14}+5t^{16}+5t^{18}+5t^{20}+4t^{22}+4t^{24}\\
&+3t^{26}+3t^{28}+2t^{30}+2t^{32}+t^{34}+t^{36}. 
   \end{split}
\end{equation*}
Last by applying formula \ref{5.7} to morphism $\nu$ and combine the table \ref{tab:2}, we only need to remove  
$$2(t^{18}+t^{20})+2(t^{16}+t^{18}+t^{20})\  \mod\ t^{20}$$ from $\IP_t(\overline{\mathcal{D}/ \Gamma}^{\Sigma(\HH_\infty)})\ \mod\ t^{20}$ in order to get $ \IP_t(\overline{\mathcal{D}/ \Gamma}^{BB})$. In this way, we obtain our formula \ref{cor4.10}.
\end{proof}

\begin{rem}
Since the Zucker's conjecture  was established in \cite{looijenga19882} and \cite{saper1990l2}, the $L^2$-cohomology of $\mathcal{D}/ \Gamma$ is isomorphic to the  intersection  cohomology of $\mathcal{D}/ \Gamma$.  our result also provides most  $L^2$-betti numbers of $\mathcal{D}/ \Gamma$ as the dimension of Baily-Borel's boundaries is $1$.
\end{rem}
\vspace{0.2cm}
\textbf{Acknowledgement:} The author would like to thank his advisor Prof. Meng Chen and Prof. Zhiyuan Li for their constant encouragements and supports, especially many helpful conversations from  Zhiyuan Li. He also thanks Haitao Zou for reading the drafts.


\bibliographystyle {plain}
\bibliography{main}
\end{document}